\def\MR#1{}
\theoremstyle{plain}
\newtheorem{theorem}{Theorem}[section]
\newtheorem{lemma}[theorem]{Lemma}
\newtheorem{corollary}[theorem]{Corollary}
\newtheorem{proposition}[theorem]{Proposition}
\theoremstyle{definition}
\theoremstyle{remark}
\newtheorem{remark}[theorem]{Remark}
\newcommand{\dv}{\operatorname{div}}
\numberwithin{equation}{section}
\newcommand{\bN}{\mathbb{N}}
\newcommand{\bR}{\mathbb{R}}
\newcommand{\bZ}{\mathbb{Z}}
\newcommand{\bS}{\mathbb{S}}
\def\dashint{\operatorname%
{\,\,\text{\bf--}\kern-.98em\DOTSI\intop\ilimits@\!\!}}
\begin{document}
\title[Optimal gradient estimates for the insulated conductivity problem]{Optimal gradient estimates of solutions to the insulated conductivity problem in dimension greater than two}
\author[H. Dong]{Hongjie Dong}
\author[Y.Y. Li]{YanYan Li}
\author[Z. Yang]{Zhuolun Yang}
\address[H. Dong]{Division of Applied Mathematics, Brown University, 182 George Street, Providence, RI 02912, USA}
\email{Hongjie\_Dong@brown.edu}

\address[Y.Y. Li]{Department of
Mathematics, Rutgers University, 110 Frelinghuysen Rd, Piscataway,
NJ 08854, USA}
\email{yyli@math.rutgers.edu}

\address[Z. Yang]{Institute for Computational and Experimental Research in Mathematics, Brown University, 121 South Main Street, Providence, RI 02903, USA}
\email{zhuolun\_yang@brown.edu}

\thanks{H. Dong is partially supported by Simons Fellows Award 007638, NSF Grant DMS-2055244, and the Charles Simonyi Endowment at the Institute of Advanced Study.}
\thanks{Y.Y. Li is partially supported by NSF Grants DMS-1501004, DMS-2000261, and Simons Fellows Award 677077.}
\thanks{Z. Yang is partially supported by NSF Grants DMS-1501004, DMS-2000261, and the Simons Bridge Postdoctoral Fellowship at Brown University}
\subjclass[2020]{35J15, 35Q74, 74E30, 74G70, 78A48}

\keywords{optimal gradient estimates, high contrast coefficients, insulated conductivity problem, degenerate elliptic equation, maximum principle}
\begin{abstract}
We study the insulated conductivity problem with inclusions embedded in a bounded domain in $\bR^n$. The gradient of solutions may blow up as $\varepsilon$, the distance between inclusions, approaches to $0$. It was known that the optimal blow up rate in dimension $n = 2$ is of order $\varepsilon^{-1/2}$. It has recently been proved that in dimensions $n \ge 3$, an upper bound of the gradient is of order $\varepsilon^{-1/2 + \beta}$ for some $\beta > 0$. On the other hand, optimal values of $\beta$ have not been identified. In this paper, we prove that when the inclusions are balls, the optimal value of $\beta$ is  $[~-(n-1)+\sqrt{(n-1)^2+4(n-2)}~]/4 \in (0,1/2)$ in dimensions $n \ge 3$.
\end{abstract}
\maketitle

\section{Introduction and main results}

First we describe the nature of the domain. Let $\Omega \subset \bR^n$ be a bounded domain with $C^{2}$ boundary containing two $C^{2,\gamma} (0 < \gamma < 1)$ relatively strictly convex open sets $D_{1}$ and $D_{2}$ with dist$(D_1 \cup D_2, \partial \Omega) > c > 0$. Let
$$\varepsilon: = \mbox{dist}(D_1, D_2)$$
and $\widetilde{\Omega} := \Omega \setminus \overline{(D_1 \cup D_2)}$. The conductivity problem can be modeled by the following elliptic equation:
\begin{equation*}
\begin{cases}
\mathrm{div}\Big(a_{k}(x)\nabla{u}_{k}\Big)=0&\mbox{in}~\Omega,\\
u_{k}=\varphi(x)&\mbox{on}~\partial\Omega,
\end{cases}
\end{equation*}
where $\varphi\in{C}^{2}(\partial\Omega)$ is given, and
$$a_{k}(x)=
\begin{cases}
k\in(0,\infty)&\mbox{in}~D_{1}\cup{D}_{2},\\
1&\mbox{in}~\widetilde{\Omega}.
\end{cases}
$$
In the context of electric conduction, the elliptic coefficients $a_k$ refer to conductivities, and the solutions $u_k$ represent voltage potential. From an engineering point of view, it is significant to estimate the magnitude of the electric fields in the narrow region between the inclusions, which is given by $|\nabla u_k|$. This problem is analogous to the Lam\'e system studied by Babu\v{s}ka, Andersson, Smith, and Levin \cite{BASL}, where they analyzed numerically that, when the ellipticity constants are bounded away from $0$ and infinity, the gradient of solutions remain bounded independent of $\varepsilon$, the distance between inclusions. Later, Bonnetier and Vogelius \cite{BV} proved that when $\varepsilon = 0$, $|\nabla u_k|$ is bounded for a fixed $k$ and circular inclusions $D_1$ and $D_2$ in dimension $n = 2$. This result was extended by Li and Vogelius \cite{LV} to general second order elliptic equations of divergence form with piecewise H\"older coefficients and general shape of inclusions $D_1$ and $D_2$ in any dimension. Furthermore, they established a stronger piecewise $C^{1,\alpha}$ control of $u_k$, which is independent of $\varepsilon$. Li and Nirenberg \cite{LN} further extended this global Lipschitz and piecewise $C^{1,\alpha}$ result to general second order elliptic systems of divergence form, including the linear system of elasticity. Some higher order derivative estimates in dimension $n=2$ were obtained in \cites{DZ,DL,JiKang}.

When $k$ equals to $\infty$ (inclusions are perfect conductors) or $0$ (insulators), it was shown in \cites{Kel, BudCar, Mar} that the gradient of solutions generally becomes unbounded as $\varepsilon \to 0$. Ammari et al. in \cite{AKLLL} and \cite{AKL} considered the perfect and insulated conductivity problems with circular inclusions in $\bR^2$, and established optimal blow-up rates $\varepsilon^{-1/2}$ in both cases.
Yun extended in \cite{Y1} and \cite{Y2} the results above allowing $D_1$ and $D_2$ to be any bounded strictly convex smooth domains.

The above gradient estimates in dimension $n = 2$ were localized and extended to higher dimensions by Bao, Li, and Yin in \cite{BLY1} and \cite{BLY2}. For the perfect conductor case, they proved in \cite{BLY1} that
\begin{equation*}
\begin{cases}
\| \nabla u \|_{L^\infty(\widetilde{\Omega})} \le C\varepsilon^{-1/2} \|\varphi\|_{C^2(\partial \Omega)} &\mbox{when}~n=2,\\
\| \nabla u \|_{L^\infty(\widetilde{\Omega})} \le C|\varepsilon \ln \varepsilon|^{-1} \|\varphi\|_{C^2(\partial \Omega)} &\mbox{when}~n=3,\\
\| \nabla u \|_{L^\infty(\widetilde{\Omega})} \le C\varepsilon^{-1} \|\varphi\|_{C^2(\partial \Omega)} &\mbox{when}~n\ge 4.
\end{cases}
\end{equation*}
These bounds were shown to be optimal in the paper and they are independent of the shape of inclusions, as long as the inclusions are relatively strictly convex. Moreover, many works have been done in characterizing the singular behavior of $\nabla u$, which are significant in practical applications. For further works on the perfect conductivity problem and closely related ones, see e.g. \cites{ACKLY,BT1,BT2,DL,KLY1,KLY2,L,LLY,LWX,BLL,BLL2,DZ,KL,CY,ADY,Gor,LimYun} and the references therein.

For the insulated conductivity problem, it was proved in \cite{BLY2} that
\begin{equation}\label{insulated_upper_bound}
\| \nabla u \|_{L^\infty(\widetilde{\Omega})} \le C\varepsilon^{-1/2} \|\varphi\|_{C^2(\partial \Omega)}\quad \mbox{when}~n\ge 2.
\end{equation}
The upper bound is optimal for $n = 2$ as mentioned above.

Yun \cite{Y3} studied the following free space insulated conductivity problem in $\bR^3$: Let $H$ be a harmonic function in $\bR^3$, $D_1 = B_1\left(0,0,1+\frac{\varepsilon}{2} \right)$, and $D_2 = B_1\left(0,0,-1-\frac{\varepsilon}{2} \right)$,
\begin{equation*}
\begin{cases}
\Delta{u}=0&\mbox{in}~\bR^3\setminus\overline{(D_1 \cup D_2)},\\
\frac{\partial u}{\partial \nu} = 0 &\mbox{on}~\partial{D}_{i},~i=1,2,\\
u(x)-H(x) = O(|x|^{-2})&\mbox{as}~|x| \to \infty.
\end{cases}
\end{equation*}
He proved that for some positive constant $C$ independent of $\varepsilon$,
\begin{equation}
\label{Yun_result}
\max_{|x_3|\le \varepsilon/2}|\nabla u(0,0,x_3)| \le C \varepsilon^{\frac{\sqrt{2}-2}{2}}.
\end{equation}
He also showed that this upper bound of $|\nabla u|$ on the $\varepsilon$-segment connecting $D_1$ and $D_2$ is optimal for $H(x) \equiv x_1$. Although this result does not provide an upper bound of $|\nabla u|$ in the complement of the $\varepsilon$-segment, it has added support to a long time suspicion that the upper bound $\varepsilon^{-1/2}$ obtained for dimension $n=3$ in \cite{BLY2} is not optimal.

The upper bound \eqref{insulated_upper_bound} was recently improved by Li and Yang \cite{LY2} to
\begin{equation*}\label{insulated_upper_bound_2}
\| \nabla u \|_{L^\infty(\widetilde{\Omega})} \le C\varepsilon^{-1/2 + \beta} \|\varphi\|_{C^2(\partial \Omega)}\quad \mbox{when}~n\ge 3,
\end{equation*}
for some $\beta > 0$. When insulators are unit balls, a more explicit constant $\beta(n)$ was given by Weinkove in \cite{We} for $n \ge 4$ by a different method. The constant $\beta(n)$ obtained in \cite{We} presumably improves that in \cite{LY2}. In particular, it was proved in \cite{We} that $\beta(n)$ approaches $1/2$ from below as $n \to \infty$. However, the optimal blow up rate in dimensions $n \ge 3$ remained unknown. We draw reader's attention to a recent survey paper \cite{Kang} by Kang, where in the conclusions section, the three-dimensional case is described as an outstanding problem.

In this paper, we focus on the following insulated conductivity problem in dimensions $n \ge 3$, and give an optimal gradient estimate for a certain class of inclusions including two balls of any size:
\begin{equation}\label{equzero}
\left\{
\begin{aligned}
-\Delta u &=0 \quad \mbox{in }\widetilde{\Omega},\\
\frac{\partial u}{\partial \nu} &= 0 \quad \mbox{on}~\partial{D}_{i},~i=1,2,\\
 u &= \varphi \quad \mbox{on } \partial \Omega,
\end{aligned}
\right.
\end{equation}
where $\varphi\in{C}^{2}(\partial\Omega)$ is given, and $\nu = (\nu_1, \ldots, \nu_n)$ denotes the inner normal vector on $\partial{D}_{1} \cup \partial{D}_{2}$.

We use the notation $x = (x', x_n)$, where $x' \in \bR^{n-1}$. After choosing a coordinate system properly, we can assume that near the origin,  the part of $\partial D_1$ and $\partial D_2$, denoted by $\Gamma_+$ and $\Gamma_-$, are respectively the graphs of two $C^{2,\gamma}(0 < \gamma < 1)$ functions in terms of $x'$. That is, for some $R_0 > 0$,
\begin{align*}
\Gamma_+ =& \left\{ x_n = \frac{\varepsilon}{2}+f(x'),~|x'|<R_0\right\},\\
\Gamma_- =& \left\{ x_n = -\frac{\varepsilon}{2}+g(x'),~|x'|<R_0\right\},
\end{align*}
where $f$ and $g$ are $C^{2,\gamma}$ functions satisfying
$$f(x')>g(x')\quad\mbox{for}~~0<|x'|<R_{0},$$
\begin{equation}\label{fg_0}
f(0')=g(0')=0,\quad\nabla_{x'}f(0')=\nabla_{x'}g(0')=0,
\end{equation}
\begin{equation}\label{fg_1}
f(x')-g(x') =  a |x'|^2 + O(|x'|^{2+\gamma})\quad\mbox{for}~~0<|x'|<R_{0},
\end{equation}
with $a >0$. Here and throughout the paper, we use the notation $O(A)$ to denote a quantity that can be bounded by $CA$, where $C$ is some positive constant independent of $\varepsilon$. For $0 < r\leq R_{0}$, we denote
\begin{align}\label{domain_def_Omega}
\Omega_{r}:=\left\{(x',x_{n})\in \widetilde{\Omega}~\big|~-\frac{\varepsilon}{2}\right.&\left.+g(x')<x_{n}<\frac{\varepsilon}{2}+f(x'),~|x'|<r\right\}.
\end{align}
By standard elliptic estimates, the solution $u \in H^1(\widetilde\Omega)$ of \eqref{equzero} satisfies
\begin{equation}
\label{u_C1_outside}
\|u\|_{C^1(\widetilde\Omega \setminus \Omega_{R_0/2})} \le C.
\end{equation}
We will focus on the following problem near the origin:
\begin{equation}\label{main_problem_narrow}
\left\{
\begin{aligned}
-\Delta u &=0 \quad \mbox{in }\Omega_{R_0},\\
\frac{\partial u}{\partial \nu} &= 0 \quad \mbox{on } \Gamma_+ \cup \Gamma_-,\\
\|u\|&_{L^\infty(\Omega_{R_0})} \le 1.
\end{aligned}
\right.
\end{equation}
It was proved in \cite{BLY2} that for $u \in H^1(\Omega_{R_0})$ satisfying \eqref{main_problem_narrow},
\begin{equation}\label{grad_u_bound_rough}
| \nabla u(x)| \le C(\varepsilon + |x'|^2)^{-1/2}, \quad \forall x \in \Omega_{R_0},
\end{equation}
where $C$ is a positive constant depending only on $n, R_0, a, \|f\|_{C^2}$, and $\|g\|_{C^2}$, and is in particular independent of $\varepsilon$. The above mentioned improvement on \eqref{insulated_upper_bound} in \cites{LY2, We} also apply to \eqref{grad_u_bound_rough}.

Our main results of this paper are as follows.

\begin{theorem}
\label{radial_thm}
For $n \ge 3$, $\varepsilon \in (0,1/4)$, let $u \in H^1(\Omega_{R_0})$ be a solution of \eqref{main_problem_narrow} with $f,g$ satisfying \eqref{fg_0} and \eqref{fg_1}. Then there exists a positive constant $C$ depending only on $n$, $R_0$, $\gamma$, a positive lower bound of $a$, and an upper bound of $\|f\|_{C^{2,\gamma}}$ and $\|g\|_{C^{2,\gamma}}$, such that
\begin{equation}
\label{main_goal}
|\nabla u(x)|\le C \|u\|_{L^\infty(\Omega_{R_0})} (\varepsilon + |x'|^2)^{\frac{\alpha-1}{2}}\quad \forall x\in \Omega_{R_0/2},
\end{equation}
where
\begin{equation}
\label{alpha_n}
\alpha=\alpha(n):= \frac {-(n-1)+\sqrt{(n-1)^2+4(n-2)}}{2}\in (0,1).
\end{equation}
\end{theorem}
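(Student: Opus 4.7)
The plan is to first establish the pointwise oscillation bound
\[
|u(x)-u(0)| \le C\,\|u\|_{L^\infty(\Omega_{R_0})}\,(\varepsilon+|x'|^2)^{\alpha/2}\qquad\text{for all }x\in\Omega_{R_0/2},
\]
and then deduce the gradient estimate \eqref{main_goal} by a dyadic rescaling argument. The exponent $\alpha$ is pinned down by the following heuristic reduction. In the narrow region $u$ is, to leading order, constant in $x_n$, so averaging $-\Delta u=0$ in $x_n$ and using the Neumann condition on $\Gamma_\pm$ formally produces the degenerate divergence-form equation
\[
\operatorname{div}'\!\bigl(h(x')\,\nabla' \bar u\bigr)=0,\qquad h(x'):=\varepsilon+f(x')-g(x')\approx \varepsilon+a|x'|^2,
\]
on a disk in $\bR^{n-1}$. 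Separating variables in polar coordinates for the limiting model equation $\operatorname{div}'(|x'|^2\nabla' U)=0$ via $U=F(r)Y_k(\theta)$ with $\Delta_{\bS^{n-2}}Y_k=-k(k+n-3)Y_k$ reduces to the Cauchy--Euler equation $r^2F''+nrF'-k(k+n-3)F=0$, whose positive indicial root $m$ satisfies $m(m+n-1)=k(k+n-3)$. The smallest value over $k\ge1$ occurs at $k=1$ and coincides with \eqref{alpha_n}; higher modes decay at faster rates, so the worst rate is governed by the first angular mode and equals $\alpha$.

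The technical core of the proof would be a maximum-principle / barrier argument yielding the oscillation bound. I would construct a supersolution of the form
\[
\Phi(x)=C_1(\varepsilon+a|x'|^2)^{\alpha/2}+\Psi(x',x_n),
\]
where $\Psi$ is a lower-order correction (polynomial in $x_n$ and of higher order in $|x'|$) chosen to enforce the Neumann condition $\partial_\nu\Phi=0$ on $\Gamma_\pm$ and to make $-\Delta\Phi\ge 0$ in $\Omega_{R_0}$. The need for $\Psi$ is forced by two defects of the bare candidate: a direct computation gives
\[
\Delta\bigl[(\varepsilon+a|x'|^2)^{\alpha/2}\bigr]=\alpha a(\varepsilon+a|x'|^2)^{\alpha/2-2}\bigl[(n-1)\varepsilon+(n-3+\alpha)a|x'|^2\bigr]>0,
\]
so the candidate is subharmonic rather than superharmonic in $\bR^n$, and its conormal derivative $-\nabla'f\cdot\nabla'\Phi$ on $\Gamma_\pm$ is nonzero. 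The correction $\Psi$ must simultaneously repair both defects, and the requirement that no new leading-order obstruction appears at the order $(\varepsilon+a|x'|^2)^{\alpha/2-1}$ selects $\alpha$ exactly through the indicial relation $\alpha(\alpha+n-1)=n-2$. Once $\Phi$ is available, comparing $u-u(0)$ against $\pm C_2\Phi$ in $\Omega_r$ via the maximum principle (Neumann data on $\Gamma_\pm$ and bounded data on $\{|x'|=r\}$ supplied by \eqref{u_C1_outside} together with an outer--to--inner iteration in $r$) yields the oscillation bound. In the innermost regime $|x'|\lesssim\sqrt{\varepsilon/a}$ the thickness is essentially constant, so an isotropic rescaling by $\sqrt{\varepsilon}$ reduces the problem to a Neumann problem on a unit-size $C^{2,\gamma}$ domain, to which classical Schauder estimates apply.

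The passage from oscillation to \eqref{main_goal} is carried out by an anisotropic dyadic rescaling in each annulus $\Omega_{2r}\setminus\Omega_{r/2}$ with $r\ge\sqrt{\varepsilon/a}$: set $y'=x'/r$, $y_n=x_n/r^2$, and $v(y)=r^{-\alpha}[u(x)-u(0)]$. In these variables the rescaled domain has horizontal extent and thickness both of order one, $v$ is uniformly bounded by the oscillation bound, and the rescaled equation together with the leading-order structure $v\approx v(y')$ yields a uniform Lipschitz bound on $v$. Undoing the scaling then produces $|\nabla u|\lesssim r^{\alpha-1}$; combined with the analogous estimate in the innermost regime this is \eqref{main_goal}. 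The principal obstacle throughout will be the construction of the barrier $\Phi$: one must realize the sharp exponent $\alpha$ \emph{simultaneously} with the Neumann boundary condition and the correct sign of $-\Delta\Phi$, while absorbing the $O(|x'|^{2+\gamma})$ perturbation of $f-g-a|x'|^2$ from \eqref{fg_1} into strictly lower-order error terms. It is precisely this balancing act that makes the indicial equation $\alpha(\alpha+n-1)=n-2$ visible at the level of the actual PDE rather than only through the heuristic averaged reduction, and it is the mechanism by which the $C^{2,\gamma}$ hypotheses on $f,g$ and the positive lower bound on $a$ enter the final estimate.
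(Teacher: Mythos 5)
Your heuristic reduction is correct and does pin down the right indicial equation: averaging in $x_n$ yields $\operatorname{div}'((\varepsilon+a|x'|^2)\nabla'\bar u)\approx 0$, and separating into spherical harmonics on $\bS^{n-2}$ gives $m(m+n-1)=k(k+n-3)$, whose $k=1$ root is $\alpha$. This is exactly what the paper exploits, but it does so \emph{rigorously on the averaged function}, not via a barrier in the full $n$-dimensional domain. The paper's route is: change variables so $\Omega_{R_0}$ becomes a cylinder and $\bar v$ solves $\operatorname{div}((\varepsilon+|y'|^2)\nabla\bar v)=\operatorname{div}F+G$ in $B_{R_0}\subset\bR^{n-1}$ with explicitly controlled error terms; decompose $\bar v=v_1+v_2$ where $v_1$ solves the homogeneous equation (handled mode-by-mode by a one-dimensional maximum principle on the radial ODE, giving decay $\rho^{\alpha}$) and $v_2$ is the particular solution (handled by Moser iteration); then a Campanato-type iteration upgrades this to the pointwise bound on $\bar v$; finally a vertical change of variables plus even/periodic extension (``flipping argument'') and interior Schauder estimates convert the oscillation bound on $\bar v$ into a gradient bound on $u$, with a bootstrap in the exponent.

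Your proposed strategy -- constructing a supersolution $\Phi=C_1(\varepsilon+a|x'|^2)^{\alpha/2}+\Psi$ in $\Omega_{R_0}$ and comparing $u-u(0)$ against $\pm C_2\Phi$ -- has a genuine gap: you never produce $\Psi$, and you explicitly call its construction ``the principal obstacle'' and a ``balancing act.'' This is not a detail to be deferred; it is the whole proof in your framework. There is in fact strong evidence that this barrier approach cannot yield the sharp exponent: Weinkove's paper \cite{We} does exactly this kind of direct barrier construction and obtains an exponent that the present paper proves is \emph{strictly worse} than $\alpha$ for every $n\ge4$. The reason is structural: a barrier must satisfy $-\Delta\Phi\ge 0$ pointwise in the slab and $\partial_\nu\Phi\ge 0$ on $\Gamma_\pm$, which is strictly stronger than the averaged (conormal-in-mean) inequality that the indicial analysis actually requires; the slack costs you in the exponent. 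The paper's averaging-plus-spectral decomposition is precisely what avoids this loss. A second, lesser gap is in your passage from oscillation to gradient: the anisotropic rescaling $y'=x'/r$, $y_n=x_n/r^2$ turns $\Delta$ into $r^{-2}\Delta_{y'}+r^{-4}\partial_{y_ny_n}$, which is not uniformly elliptic as $r\to 0$, so ``a uniform Lipschitz bound on $v$'' does not follow from standard estimates as claimed. The paper instead keeps $z'=x'$ unrescaled, rescales only the vertical variable to $z_n\in(-R^2,R^2)$ so that the coefficients $b^{ij}$ are uniformly elliptic with $\|b\|_{C^\mu}\le CR^{-\mu}$, and then uses even/periodic reflection in $z_n$ before invoking interior Schauder at scale $R$.
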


Note that $\alpha(n)$ is monotonically increasing in $n$, and
$$
\alpha(n) = 1 - \frac{2}{n} + O(\frac{1}{n^2}) \quad \mbox{as}~~n \to \infty.
$$
For $n = 3$, the exponent $\frac{\alpha - 1}{2} = \frac{\sqrt{2}-2}{2}$ is the same as the exponent in \eqref{Yun_result}. For $n \ge 4$, the exponent $\frac{\alpha-1}{2}$ is strictly greater than the one obtained in \cite{We}.

A consequence of Theorem \ref{radial_thm} is, in view of \eqref{u_C1_outside}, as follows.
\begin{corollary}
For $n \ge 3$, $\varepsilon \in (0,1/4)$, let $D_1, D_2$ be two balls of radius $r_1, r_2$, center at $(0', \varepsilon/2 + r_1)$ and $(0', - \varepsilon/2 -r_2)$, respectively. Let $u \in H^1(\widetilde\Omega)$ be the solution of $\eqref{equzero}$. Then there exists a positive constant $C$ depending only on $n$, $r_1$, $r_2$, and $\| \partial \Omega \|_{C^2}$ such that
\begin{equation}
\label{grad_u_upperbound_balls}
\|\nabla u\|_{L^\infty(\widetilde\Omega)} \le C \|\varphi\|_{C^2(\partial \Omega)} \varepsilon^{\frac{\alpha - 1}{2}},
\end{equation}
where $\alpha$ is given by \eqref{alpha_n}.
\end{corollary}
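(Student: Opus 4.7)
The plan is to deduce the corollary directly from Theorem \ref{radial_thm}, with two modest tasks remaining: (i) verify that the two-ball configuration satisfies hypotheses \eqref{fg_0}--\eqref{fg_1}, and (ii) patch the near-contact estimate from Theorem \ref{radial_thm} together with a standard Schauder estimate on the bulk of $\widetilde\Omega$ via \eqref{u_C1_outside}.

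I first normalize by linearity so that $\|\varphi\|_{C^2(\partial\Omega)}=1$; the maximum principle then gives $\|u\|_{L^\infty(\widetilde\Omega)}\le 1$. Near the origin the boundary pieces $\Gamma_{\pm}$ are the graphs of
$$
f(x')=r_1-\sqrt{r_1^2-|x'|^2},\qquad g(x')=-r_2+\sqrt{r_2^2-|x'|^2},
$$
which clearly satisfy \eqref{fg_0} together with
$$
f(x')-g(x')=\Bigl(\tfrac{1}{2r_1}+\tfrac{1}{2r_2}\Bigr)|x'|^2+O(|x'|^4),
$$
so \eqref{fg_1} holds with $a=(2r_1)^{-1}+(2r_2)^{-1}>0$ and any $\gamma\in(0,1)$. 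Choosing $R_0=\tfrac12\min(r_1,r_2)$, the $C^{2,\gamma}$-norms of $f$ and $g$ and the lower bound on $a$ are controlled by $r_1,r_2$ alone.

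Next I apply Theorem \ref{radial_thm} on $\Omega_{R_0}$ to obtain, for every $x\in\Omega_{R_0/2}$,
$$
|\nabla u(x)|\le C(\varepsilon+|x'|^2)^{(\alpha-1)/2}\le C\varepsilon^{(\alpha-1)/2},
$$
since $(\alpha-1)/2<0$. On the complement, estimate \eqref{u_C1_outside}, which is standard Schauder regularity for the harmonic function $u$ away from the pinch (using the smooth Neumann sphere boundaries $\partial D_i$ and the $C^2$ Dirichlet boundary $\partial\Omega$, all separated by distances depending only on $r_1,r_2$ and the geometry of $\Omega$), gives $\|\nabla u\|_{L^\infty(\widetilde\Omega\setminus\Omega_{R_0/2})}\le C$ with $C$ depending only on $n, r_1, r_2, \|\partial\Omega\|_{C^2}$. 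Because $\varepsilon\in(0,1/4)$ and $(\alpha-1)/2<0$ we have $\varepsilon^{(\alpha-1)/2}>1$, so this constant is trivially dominated by $C\varepsilon^{(\alpha-1)/2}$. Combining the two bounds and undoing the normalization produces \eqref{grad_u_upperbound_balls}.

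The main difficulty is entirely absorbed into Theorem \ref{radial_thm}; the corollary itself is essentially bookkeeping. The only point needing a moment of care is confirming that the constant in \eqref{u_C1_outside} depends only on the declared parameters, which is immediate once one observes that $\dist(\partial D_i,\partial\Omega)$ and the radii $r_i$ are fixed (independent of $\varepsilon$), so interior and boundary Schauder estimates apply with constants independent of $\varepsilon$ both along the outer Dirichlet piece and along the Neumann spheres outside a fixed neighborhood of the pinch.
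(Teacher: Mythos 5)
Your argument is correct and follows the paper's approach exactly: the paper states the corollary as a direct consequence of Theorem \ref{radial_thm} combined with the bulk estimate \eqref{u_C1_outside}. You have simply spelled out the verification that two balls satisfy \eqref{fg_0}--\eqref{fg_1} with $a=(2r_1)^{-1}+(2r_2)^{-1}$, and the observation that $\varepsilon^{(\alpha-1)/2}>1$ for $\varepsilon<1$ absorbs the constant from \eqref{u_C1_outside}, both of which the paper leaves implicit.
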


Estimate \eqref{grad_u_upperbound_balls} is optimal as shown in the following theorem.

\begin{theorem}
\label{optimality}
For $n \ge 3$, $\varepsilon \in (0,1/4)$, let $\Omega = B_5$, and $D_1 , D_2$ be the unit balls center at $(0', 1 + \varepsilon/2)$ and $(0', -1 - \varepsilon/2)$, respectively. Let $\varphi = x_1$ and $u \in H^1(\widetilde\Omega)$ be the solution of \eqref{equzero}. Then there exists positive constant $C$ depending only on $n$, such that
\begin{equation}
\label{grad_u_lower_bound}
\|\nabla u\|_{L^\infty(\widetilde\Omega \cap B_{2\sqrt{\varepsilon}})} \ge \frac{1}{C}\varepsilon^{\frac{\alpha-1}{2}},
\end{equation}
where $\alpha$ is given by \eqref{alpha_n}.
\end{theorem}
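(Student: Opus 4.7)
The plan is to exhibit an explicit near-solution realizing the predicted blow-up rate, and then to force $u$ to inherit the lower bound via a comparison argument. The first step exploits the symmetries of the problem: because $\varphi=x_{1}$ is odd in $x_{1}$, $\Omega=B_{5}$ is symmetric, and $D_{1},D_{2}$ are reflections of each other across $\{x_{n}=0\}$, uniqueness for \eqref{equzero} forces $u(-x_{1},x_{2},\ldots,x_{n})=-u(x_{1},\ldots,x_{n})$ and $u(x',-x_{n})=u(x',x_{n})$. In particular $u\equiv 0$ on $\{x_{1}=0\}\cap\widetilde{\Omega}$, so if one can locate a probe point $x^{*}=(c\sqrt{\varepsilon},0',0)\in\widetilde{\Omega}$ at which $|u(x^{*})|\gtrsim\varepsilon^{\alpha/2}$, then the mean value theorem applied to $t\mapsto u(t,0',0)$ on $[0,c\sqrt{\varepsilon}]$ produces a point in $\widetilde{\Omega}\cap B_{2\sqrt{\varepsilon}}$ where $|\partial_{1}u|\gtrsim\varepsilon^{(\alpha-1)/2}$, which is exactly \eqref{grad_u_lower_bound}.

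To produce such a pointwise lower bound for $u$, I would construct an approximately harmonic competitor $V$ satisfying $\partial_{\nu}V=0$ on $\Gamma_{\pm}$ to leading order and with the explicit behavior $V\sim x_{1}(\varepsilon+|x'|^{2})^{(\alpha-1)/2}$ near the origin. A natural ansatz, consistent both with the $\cos\theta$ mode in the $x'$-direction and with the evenness in $x_{n}$, is $V(x)=x_{1}\Psi(|x'|,x_{n})$. Imposing $\Delta V=0$ and the Neumann conditions on $\Gamma_{\pm}$ (recall $f(x')-g(x')=|x'|^{2}+O(|x'|^{2+\gamma})$ for unit balls), then averaging across the thin strip in the $x_{n}$-direction, reduces to a degenerate ODE for $\Psi(r,0)$ whose indicial equation is precisely $\alpha^{2}+(n-1)\alpha-(n-2)=0$. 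Its positive root is the $\alpha(n)$ of \eqref{alpha_n}, and the corresponding solution has the desired scaling $\Psi\sim(\varepsilon+r^{2})^{(\alpha-1)/2}$. This is essentially the separation-of-variables solution in bispherical-type coordinates adapted to the two spheres, which is consistent with the explicit construction behind Yun's three-dimensional result \eqref{Yun_result}.

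The final step is to glue $V$ to a harmonic function on $\widetilde{\Omega}\setminus\Omega_{R_{0}/2}$ matching the boundary data $\varphi$ on $\partial\Omega$, producing a globally admissible competitor $\tilde V$. The difference $u-\tilde V$ then solves a Neumann problem with small defects, and standard energy estimates together with the interior bound \eqref{u_C1_outside} should control it by an $L^{\infty}$-error strictly smaller than $\varepsilon^{\alpha/2}$ at $x^{*}$, so that $u$ inherits the pointwise lower bound of $\tilde V$. The main obstacle lies in the construction of $V$: rigorously extracting the exponent $\alpha(n)$ (rather than one of the other indicial roots, and in particular not something weaker) requires a careful asymptotic matching in the transition region $|x'|\sim\sqrt{\varepsilon}$, where the curvature of $\Gamma_{\pm}$ and the gap $\varepsilon$ contribute at the same order and must be tracked simultaneously, uniformly in $\varepsilon$. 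Handling this matching and quantifying the resulting smallness of $u-\tilde V$ is where most of the technical work will sit.
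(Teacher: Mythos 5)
Your outer skeleton is right and matches the paper's: by uniqueness, $u$ is odd in $x_1$, hence $u(0)=0$, so a pointwise bound $|u(x^*)|\gtrsim\varepsilon^{\alpha/2}$ at some probe point $x^*$ with $|x^*|\lesssim\sqrt{\varepsilon}$ yields \eqref{grad_u_lower_bound} by the mean value theorem. You also correctly identify the indicial equation $\alpha^2+(n-1)\alpha-(n-2)=0$ whose positive root is \eqref{alpha_n}. But your route to the pointwise lower bound is genuinely different from the paper's, and it has a gap that is not merely ``technical work.'' The paper does not construct an explicit near-solution. Instead, it averages $u$ across the thin gap to obtain $\bar u(x')$, projects onto the $Y_{1,1}$ spherical-harmonic mode to get a scalar function $U_{1,1}(r)$ solving a nonhomogeneous degenerate ODE $LU_{1,1}=H$, and shows via reduction of order (Lemma \ref{ODE_lemma}) that $U_{1,1}(r)=C_1(\varepsilon)h(r)+O(r^{1+\alpha})$, where $h$ is the bounded homogeneous solution with $r<h(r)<r^\alpha$ and $h(\sqrt{\varepsilon})\gtrsim\varepsilon^{\alpha/2}$.

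The decisive ingredient you are missing is the one-sided bound that pins down the leading coefficient: the paper introduces $\hat u(r,x_n)=\int_{\bS^{n-2}}u\,Y_{1,1}$, shows it solves a two-dimensional Neumann problem \eqref{equation_u_hat} in the $(r,x_n)$ half-plane, and observes that $\hat v(r)=r$ is a \emph{subsolution} because $\partial_\nu r<0$ on the circular arcs $\partial\widehat D_i$. Hence $\hat u\ge r$, so $U_{1,1}(r)\ge r$, and comparing with $U_{1,1}=C_1 h+O(r^{1+\alpha})\le C_1 r^\alpha+\tfrac12 r$ forces $C_1\ge\tfrac12 r_0^{1-\alpha}$ uniformly in $\varepsilon$. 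This barrier argument is clean, rigorous, and does not require constructing or gluing an approximate solution. Your competitor approach, by contrast, would need (i) a rigorous asymptotic construction of $V$ uniformly through the transition zone $|x'|\sim\sqrt{\varepsilon}$, which you acknowledge is the crux; and, more seriously, (ii) an $L^\infty$ error bound $|u-\tilde V|\ll\varepsilon^{\alpha/2}$ \emph{at the probe point inside the thin region}, which standard energy estimates do not supply (they give $L^2$ or $H^1$ control, and converting to pointwise control in a region of thickness $\varepsilon$ generically loses powers of $\varepsilon$). Without a mechanism that gives a one-sided, sign-definite comparison --- which is exactly what the subsolution $\hat v=r$ provides --- proximity to a competitor does not by itself yield the lower bound on $u$.
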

\begin{remark}
Estimate \eqref{grad_u_lower_bound} holds for all $C^2$ domains $\Omega$ and $C^{4}$ relatively strictly convex open sets $D_1, D_2$ that are axially symmetric with respect to $x_n$-axis. A modification of the proof of the theorem yields the result.
\end{remark}

Let us give a brief description of the proof of Theorem \ref{optimality}. Consider
$$\bar{u}(x') =  \fint_{-\varepsilon/2 + g(x') <x_n<\varepsilon/2 + f(x')} u(x',x_n) \, dx_n, \quad |x'| < 1,$$
where $f(x') =  -\sqrt{1-|x'|^2} + 1$ and $g(x') = \sqrt{1-|x'|^2} - 1$.
In the polar coordinates, $\bar u (x') = \bar u(r, \xi)$, where $x' = (r,\xi)$, $0 < r < 1$, and $\xi \in \bS^{n-2}$. Since the boundary value $\varphi$ depends only on $x_1$ and is odd in $x_1$, the projection of $\bar u(r , \cdot)$ to the span of the spherical harmonics is $U_{1,1}(r)Y_{1,1}(\xi)$, where $Y_{1,1}$ is $\left.x_1\right|_{\bS^{n-2}}$ modulo a harmless positive normalization constant,
$$
U_{1,1}(r)= \fint_{-\varepsilon/2 + g(x') <x_n<\varepsilon/2 + f(x')} \hat{u}(r,x_n) \, dx_n,
$$
and
$$
\hat{u}(r,x_n) = \int_{\bS^{n-2}} u(r,\xi,x_n)Y_{1,1}(\xi) \, d \xi.
$$
We analyze the equations satisfied by $U_{1,1}(r)$ and $\hat{u}(r,x_n)$ and establish a lower bound
$$
U_{1,1}(r) \ge \frac{1}{C} r^\beta (\varepsilon + r^2)^{\frac{\alpha - \beta}{2}},\quad 0<r <1,
$$
where $\beta = \frac{2\alpha^2 + \alpha(n-1)}{n-3+\alpha}$ and $C$ is a positive constant independent of $\varepsilon$. It follows that
$$
\| \bar{u}(\sqrt{\varepsilon},\cdot) \|_{L^2(\bS^{n-2})} \ge |U_{1,1}(\sqrt{\varepsilon})| \ge \frac{1}{C} \varepsilon^{\alpha/2},
$$
and, consequently, there exists $\xi_0 \in \bS^{n-2}$, $x_n \in (-\varepsilon/2 + g(x'), \varepsilon/2 + f(x'))$ such that
$$
|u(\sqrt{\varepsilon}, \xi_0, x_n)| \ge \frac{1}{C} \varepsilon^{\alpha/2}.
$$
Estimate \eqref{grad_u_lower_bound} follows since $u(0) = 0$ by the oddness of $u$ in $x_1$.

Theorems \ref{radial_thm} and \ref{optimality} will be proved in Sections 2 and 3, respectively.

\section{Proof of Theorem \ref{radial_thm}}
In this section, we prove Theorem \ref{radial_thm}. Without loss of generality, we may assume $a = 1$. Namely, we consider
\begin{equation*}
\label{fg_assumption_1}
f(x')- g(x') =|x'|^2 + O(|x'|^{2+\gamma}) \quad\mbox{for}~~0<|x'|<R_{0}.
\end{equation*}
We perform a change of variables by setting
\begin{equation}\label{x_to_y}
\left\{
\begin{aligned}
y' &= x' ,\\
y_n &= 2 \varepsilon \left( \frac{x_n - g(x') + \varepsilon/2}{\varepsilon + f(x') - g(x')} - \frac{1}{2} \right),
\end{aligned}\right.
\quad \forall (x',x_n) \in \Omega_{R_0}.
\end{equation}
This change of variables maps the domain $\Omega_{R_0}$ to a cylinder of height $\varepsilon$, denoted by $Q_{R_0, \varepsilon}$, where
\begin{equation*}\label{Q_s_t}
Q_{s,t}:= \{ y = (y',y_n) \in \bR^n ~\big|~  |y'| < s,  |y_n| < t\}
\end{equation*}
for $s,t > 0$. Moreover, $\det (\partial_x y)=2\varepsilon(\varepsilon + f(x') - g(x'))^{-1}$.
Let $u(x) \in H^1(\Omega_{R_0})$ be a solution of \eqref{main_problem_narrow} and let $v(y) = u(x)$. Then $v$ satisfies
\begin{equation}\label{equation_v}
\left\{
\begin{aligned}
-\partial_i(a^{ij}(y) \partial_j v(y)) &=0 \quad \mbox{in } Q_{R_0, \varepsilon},\\
a^{nj}(y) \partial_j v(y) &= 0 \quad \mbox{on } \{y_n = -\varepsilon\} \cup \{y_n = \varepsilon\},
\end{aligned}
\right.
\end{equation}
with $\|v\|_{L^\infty(Q_{R_0, \varepsilon})} \le 1$,
where the coefficient matrix $(a^{ij}(y))$ is given by
\begin{align}\label{a_ij_formula}
&(a^{ij}(y)) = \frac{ 2 \varepsilon(\partial_x y)(\partial_x y)^t}{\det (\partial_x y)}\nonumber\\
&= \begin{pmatrix}
\varepsilon + |y'|^2 &0 &\cdots &0 &a^{1n}\\
0 &\varepsilon + |y'|^2 &\cdots &0 &a^{2n}\\
\vdots &\vdots &\ddots &\vdots &\vdots\\
0 &0 &\cdots &\varepsilon + |y'|^2 &a^{n-1,n}\\
a^{n1} &a^{n2} &\cdots &a^{n,n-1} & \frac{4\varepsilon^2 + \sum_{i=1}^{n-1}|a^{in}|^2}{\varepsilon + |y'|^2}
\end{pmatrix}
+
\begin{pmatrix}
e^1 &0 &\cdots &0\\
0 &e^2 &\cdots &0\\
\vdots &\vdots &\ddots &\vdots\\
0 &0 &\cdots & e^n
\end{pmatrix},
\end{align}
and
$$
a^{ni} = a^{in} = -2\varepsilon \partial_i g(y') - (y_n + \varepsilon)\partial_i(f(y') - g(y'))
$$
for $i = 1, \ldots , n-1$. By \eqref{fg_0}, we know that for $i = 1, \ldots , n-1$,
\begin{equation}
\label{coefficient_bounds}
|a^{ni}(y)| = |a^{in}(y)| \le C \varepsilon |y'|, \quad |e^i(y')| \le C|y'|^{2+\gamma}, \quad \mbox{and} \quad |e^n(y)| \le \frac{C\varepsilon^2|y'|^\gamma}{\varepsilon + |y'|^2}.
\end{equation}
Note that $e^1, \ldots , e^{n-1}$ depend only on $y'$ and are independent of $y_n$. We define
\begin{equation}
\label{v_bar_def}
\bar{v}(y') := \fint_{-\varepsilon}^\varepsilon v(y',y_n)\, dy_n.
\end{equation}
Then $\bar{v}$ satisfies
\begin{equation}\label{equation_v_bar}
\dv((\varepsilon+|y'|^2)\nabla \bar v)=-\sum_{i=1}^{n-1}\partial_i(\overline{a^{in}\partial_n v}) - \sum_{i=1}^{n-1} \partial_i(e^i \partial_i \bar v) \quad\text{in}\,\,B_{R_0}\subset \bR^{n-1},
\end{equation}
with $\| \bar{v} \|_{L^\infty(B_{R_0})} \le 1$, where $\overline{a^{in}\partial_n v}$ is the average of $a^{in}\partial_n v$ with respect to $y_n$ in $(-\varepsilon,\varepsilon)$. Since $\frac{\partial u}{\partial \nu} = 0$ on $\Gamma_+$ and $\Gamma_-$, we have, by
\eqref{fg_0} and \eqref{grad_u_bound_rough} that
$$|\partial_n u(x)| \le C\left| \sum_{i = 1}^{n-1}x_i \partial_i u \right| \le C, \quad \forall x \in \Gamma_+ \cup \Gamma_-.$$
By the harmonicity of $\partial_n u$, the estimate \eqref{u_C1_outside}, and the maximum principle,
\begin{equation}
\label{grad_n_u_bound}
|\partial_n u| \le C \quad \mbox{in}~~\Omega_{R_0},
\end{equation}
and consequently,
\begin{equation}
\label{grad_n_v_bound}
|\partial_n v| \le C(\varepsilon+|y'|^2)/\varepsilon \quad \mbox{in}~~Q_{R_0,\varepsilon}.
\end{equation}
Therefore, the equation \eqref{equation_v_bar} can be rewritten as
\begin{equation}
\label{equation_v_bar_2}
\dv((\varepsilon+|y'|^2)\nabla \bar v)=\sum_{i=1}^{n-1}\partial_i F_i\quad\text{in}\,\,B_{R_0}\subset \bR^{n-1},
\end{equation}
where $F_i:=-\overline{a^{in}\partial_n v} - e^i\partial_i \bar{v}$ satisfies, using \eqref{grad_u_bound_rough} and \eqref{coefficient_bounds},
\begin{equation*}
|F_i|\le C\left(|y'|(\varepsilon + |y'|^2) + |y'|^{2+\gamma}(\varepsilon+|y'|^2)^{-1/2}\right).
\end{equation*}
For $\gamma, s \in \bR$, we introduce the following norm
$$
\| F \|_{\varepsilon, \gamma,s,B_{R_0}}: = \sup_{y' \in B_{R_0}} \frac{|F(y')|}{|y'|^{\gamma} (\varepsilon + |y'|^2)^{1-s}}.
$$

\begin{proposition}
\label{prop_grad_v_bar_control}
For $n \ge 3$, $s \ge 0$, $1+\gamma - 2s > 0$, $1+\gamma - 2s \neq \alpha$, $\varepsilon> 0$, and $R_0 > 0$,
let $\bar{v} \in H^1(B_{R_0})$ be a solution of
\begin{equation*}
\label{equation_v_bar_3}
\dv((\varepsilon+|y'|^2)\nabla \bar v)= \dv F + G \quad\text{in}\,\,B_{R_0}\subset \bR^{n-1},
\end{equation*}
where $F,G \in L^\infty(B_{R_0})$ satisfy
\begin{equation}
\label{FG_bound}
\|F\|_{\varepsilon, \gamma,s,B_{R_0}}< \infty , \quad \|G\|_{\varepsilon, \gamma-1,s,B_{R_0}} < \infty.
\end{equation}
Then for any $R \in (0, R_0/2)$, we have
\begin{equation}
\label{v_bar_L2_sphere}
\Big(\fint_{\partial B_R}|\bar v - \bar{v}(0)|^2\,d\sigma\Big)^{1/2} \le C(\|F\|_{\varepsilon, \gamma,s,B_{R_0}} + \|G\|_{\varepsilon, \gamma-1,s,B_{R_0}} +  \| \bar v - \bar v(0) \|_{L^2(\partial B_{R_0})})  R^{\tilde\alpha},
\end{equation}
where $\tilde\alpha := \min \{ \alpha , 1 + \gamma - 2s \}$, $\alpha$ is given in \eqref{alpha_n}, and $C$ is some positive constant depending only on $n$, $\gamma$, $s$, and $R_0$, and is independent of $\varepsilon$.
\end{proposition}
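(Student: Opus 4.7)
My plan is to decompose $\bar v - \bar v(0)$ in spherical harmonics on $\bS^{n-2}$ with respect to the angular variable and analyze each mode via the resulting ODE in $r = |y'|$; the critical exponent $\alpha$ in \eqref{alpha_n} emerges as the indicial exponent at $r = 0$ of the degenerate ODE for the first mode $k = 1$.

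Let $\{Y_k^m\}$ be an $L^2(\bS^{n-2})$-orthonormal basis of spherical harmonics with $-\Delta_{\bS^{n-2}} Y_k^m = \lambda_k Y_k^m$ and $\lambda_k = k(k+n-3)$. Writing $y' = r\xi$ and expanding
$$\bar v(r, \xi) - \bar v(0) = \sum_{k \ge 0, m} V_{k,m}(r) Y_k^m(\xi), \quad V_{k,m}(0) = 0,$$
and projecting the equation onto each mode gives the weighted ODE
$$L_k V_{k,m} := (r^{n-2}(\varepsilon + r^2) V'_{k,m})' - \lambda_k r^{n-4}(\varepsilon + r^2) V_{k,m} = H_{k,m}(r),$$
where $H_{k,m}$ is the $(k,m)$-coefficient of $\dv F + G$ (handled distributionally via spherical-harmonic identities) and is pointwise bounded by the weighted norms in \eqref{FG_bound}. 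The $\varepsilon = 0$ indicial exponents of $L_k$ at the origin are the roots $\alpha_k^\pm$ of $\alpha^2 + (n-1)\alpha - \lambda_k = 0$, and $\alpha_k^+$ is strictly increasing in $k \ge 1$ with $\alpha_1^+ = \alpha$.

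The heart of the proof is an $\varepsilon$-uniform per-mode estimate: for $k \ge 1$,
$$|V_{k,m}(r)| \le C \,(r/R_0)^{\alpha_k^+}\, |V_{k,m}(R_0)| + C\, r^{\min\{\alpha_k^+,\,1 + \gamma - 2s\}}\, \nu_{k,m},$$
with $C$ independent of $\varepsilon$, $k$, $m$, and $(\nu_{k,m})_m$ an $\ell^2$ sequence with norm controlled by $\|F\|_{\varepsilon,\gamma,s,B_{R_0}} + \|G\|_{\varepsilon,\gamma-1,s,B_{R_0}}$. I would obtain this by constructing two fundamental solutions $\Phi_k^\pm(r)$ of $L_k \Phi_k^\pm = 0$ via a two-regime matching: for $r \lesssim \sqrt\varepsilon$ the rescaled equation is uniformly elliptic and $\Phi_k^+ \sim r^k$, $\Phi_k^- \sim r^{-(k+n-3)}$; for $r \gtrsim \sqrt\varepsilon$ the Euler-type leading part governs and $\Phi_k^\pm \sim r^{\alpha_k^\pm}$; a continuous matching at $r \sim \sqrt\varepsilon$ combined with Abel's formula (which gives $\Phi_k^+(\Phi_k^-)' - (\Phi_k^+)'\Phi_k^- = c/[r^{n-2}(\varepsilon + r^2)]$) keeps all constants uniform in $\varepsilon$. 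Variation of parameters against $(\Phi_k^+, \Phi_k^-)$ then yields the per-mode bound. The radial mode $k = 0$ is handled directly, since $L_0$ integrates once to a first-order equation for $V_{0,0}'$, producing $|V_{0,0}(r)| \le C r^{1 + \gamma - 2s}$. Parseval's identity on $\bS^{n-2}$ together with $\alpha_k^+ \ge \alpha$ for all $k \ge 1$ then delivers \eqref{v_bar_L2_sphere} with $\tilde\alpha = \min\{\alpha, 1 + \gamma - 2s\}$; the hypothesis $1 + \gamma - 2s \ne \alpha$ rules out a resonance between the source and the critical homogeneous exponent that would otherwise generate a logarithmic factor.

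The main obstacle I anticipate is the $\varepsilon$-uniform construction of the fundamental solutions $\Phi_k^\pm$ and the estimation of the variation-of-parameters integrals: the ODE is not a pure Euler equation when $\varepsilon > 0$, and the transition scale $r \sim \sqrt\varepsilon$ must be tracked carefully so that the per-mode constants do not degenerate as $\varepsilon \to 0^+$. A secondary point is summability over the angular mode index $k$, which rests on the strict monotonicity of $\alpha_k^+$ in $k$ and is precisely what identifies $\alpha = \alpha_1^+$ as the sharp exponent.
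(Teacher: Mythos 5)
Your strategy is genuinely different from the paper's, and while it is not unreasonable, it leaves two real obstacles unresolved that the paper deliberately sidesteps.

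The paper splits $\bar v = v_1 + v_2$ on each dyadic ball $B_R$: $v_2 \in H^1_0(B_R)$ solves the full inhomogeneous equation and $v_1$ solves the homogeneous one with $v_1 = \bar v$ on $\partial B_R$. The homogeneous part $v_1$ is estimated mode-by-mode, but via a one-line maximum-principle comparison: one checks $L_k r^{\alpha_k} \le 0$ and $L_k r^{-c} \le 0$, so $|V_{k,i}(r)| \le r^{\alpha_k}|V_{k,i}(1)|$ with constant exactly $1$, uniform in $\varepsilon$ and $k$. The inhomogeneous part $v_2$ is \emph{never} decomposed into modes; instead the $L^\infty$ bound $\|v_2\|_{L^\infty} \lesssim \|F\| + \|G\|$ is proved by Moser iteration, where the crucial point is that the radial drift $\frac{2r}{\varepsilon+r^2}\partial_r$ contributes a term of favorable sign after integration by parts. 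The proposition then follows from a standard Campanato-type iteration in $R$ combined with rescaling.

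Compared with this, your plan to project the entire equation (source included) onto each spherical harmonic mode and solve the resulting ODEs by variation of parameters runs into the two difficulties you yourself flag, and neither is resolved in the proposal. First, the $\varepsilon$-uniform construction of $\Phi_k^\pm$ via two-regime matching is exactly where the paper gains by using the maximum principle: since $\alpha_k^+ < k$, the near-origin behavior $\sim r^k$ and far behavior $\sim r^{\alpha_k^+}$ have different exponents, so the matching constant at $r \sim \sqrt{\varepsilon}$ must be shown not to degenerate as $\varepsilon \to 0$, and also to be uniform in $k$; the paper never has to do this because the comparison functions $r^{\alpha_k}$ are global sub/supersolutions for every $\varepsilon$. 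Second, and more seriously, the source term in your per-mode ODE is $H_{k,m} = r^{n-2}(\dv F + G)_{k,m}$, and the angular part of $\dv F$ contributes $-\frac{1}{r}\int_{\bS^{n-2}} F_\xi \cdot \nabla_\xi Y_k^m\,d\xi$. Under only a pointwise bound on $|F|$, this coefficient carries a factor of order $\sqrt{\lambda_k} \sim k$, so the mode-wise RHS bound is not uniform in $k$. Your hoped-for $\ell^2$ sequence $\nu_{k,m}$ with norm controlled by $\|F\|_{\varepsilon,\gamma,s} + \|G\|_{\varepsilon,\gamma-1,s}$ is therefore not automatic; whether the extra decay in $k$ coming from the kernels $(\tau/r)^{\alpha_k^+}$ in the variation-of-parameters integrals is enough to absorb the $k$-growth would need to be proved and is not. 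The paper avoids this issue altogether by keeping the inhomogeneous part in physical space and using Moser iteration, where only $|F|$ and $|G|$ pointwise enter. If you want to keep your route, the cleanest fix is to mirror the paper's split: do the mode decomposition only for the homogeneous part (where the maximum-principle comparison gives you the exponent $\alpha$ at constant $1$, uniformly in $\varepsilon$ and $k$), and handle the inhomogeneity by a scalar $L^\infty$ estimate that avoids angular decomposition.
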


For the proof, we use an iteration argument based on the following two lemmas.

\begin{lemma}
\label{lemma_v1}
For $n \ge 3$, $\varepsilon > 0$, and $R_0 > 0$, let $v_1 \in H^1(B_{R_0})$ satisfy
\begin{equation}
\label{equation_v1}
\dv((\varepsilon+|y'|^2)\nabla v_1)=0\quad\text{in}\,\,B_{R_0} \subset \bR^{n-1}.
\end{equation}
Then for any $0 < \rho < R \le R_0$, we have
$$
\left( \fint_{\partial B_\rho} |v_1(y') - v_1(0)|^2 \, d\sigma \right)^{\frac{1}{2}} \le \left(\frac{\rho}{R} \right)^{\alpha} \left( \fint_{\partial B_R} |v_1(y') - v_1(0)|^2 \, d\sigma \right)^{\frac{1}{2}},
$$
where $\alpha$ is given in \eqref{alpha_n}.
\end{lemma}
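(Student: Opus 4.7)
My plan is to decompose $v_1$ into spherical harmonics and reduce the lemma to an ODE monotonicity statement for each nonradial mode. Writing $y' = r\xi$ with $r = |y'|$ and $\xi \in \bS^{n-2}$, equation \eqref{equation_v1} decouples under separation of variables: for each eigenfunction $Y$ of $-\Delta_{\bS^{n-2}}$ with eigenvalue $\lambda$, a product $V(r) Y(\xi)$ solves \eqref{equation_v1} if and only if
$$
(r^{n-2}(r^2 + \varepsilon)V'(r))' = \lambda\, r^{n-4}(r^2 + \varepsilon) V(r),\quad 0<r<R_0.
$$
For $\lambda = 0$, only constants are bounded at $r = 0$, so the mean of $v_1$ on $\partial B_r$ is independent of $r$ and equals $v_1(0)$ by continuity. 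Choosing an orthonormal basis $\{Y_k\}_{k\ge 1}$ of the orthogonal complement of the constants in $L^2(\bS^{n-2})$ consisting of eigenfunctions with eigenvalues $\lambda_k$ (so $\lambda_k \ge \lambda_1 = n - 2$, with equality exactly on the degree-one harmonics), Parseval gives
$$
|\bS^{n-2}| \fint_{\partial B_r}|v_1 - v_1(0)|^2\, d\sigma = \sum_{k \ge 1} V_k(r)^2,
$$
where $V_k$ is the coefficient of $Y_k$. The lemma therefore reduces to the bound $V_k(\rho)^2 \le (\rho/R)^{2\alpha} V_k(R)^2$ for every $k \ge 1$.

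Let $\beta_k$ denote the positive root of $\beta^2 + (n-1)\beta = \lambda_k$, so $\beta_k \ge \beta_1 = \alpha$. The core claim is that $V_k(r)/r^{\beta_k}$ is non-decreasing on $(0, R_0]$, for then $V_k(\rho)^2 \le (\rho/R)^{2\beta_k} V_k(R)^2 \le (\rho/R)^{2\alpha} V_k(R)^2$ as required. By linearity and uniqueness of the solution regular at the singular point $r = 0$, I may assume $V_k > 0$. The ODE is of the form $(p V_k')' = q V_k$ with $p(r) = r^{n-2}(r^2+\varepsilon) > 0$ and $q(r) = \lambda_k r^{n-4}(r^2+\varepsilon) \ge 0$, so $p V_k'$ is non-decreasing; the Frobenius asymptotics $V_k \sim c r^{\gamma_k}$ near $0$ (with $\gamma_k \ge 1$ the positive indicial root of $\gamma^2 + (n-3)\gamma = \lambda_k$) force $p V_k' \to 0$ as $r \to 0^+$, and hence $V_k' \ge 0$ throughout.

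To promote this into monotonicity of $V_k/r^{\beta_k}$, I would introduce $\Phi(r) := r V_k'(r) - \beta_k V_k(r)$. Using the ODE to eliminate $r V_k''$ together with the algebraic identity $\lambda_k - \beta_k(n - 3 + \beta_k) = 2\beta_k$ (an immediate consequence of $\beta_k^2 + (n-1)\beta_k = \lambda_k$), a direct computation reduces the equation for $\Phi$ to
$$
[r^{n-3+\beta_k}(r^2+\varepsilon)\Phi(r)]' = 2\beta_k\varepsilon\, r^{n-4+\beta_k} V_k(r) \ge 0.
$$
The Frobenius expansion of $V_k$ at $0$ also forces the bracketed quantity to vanish there, so $\Phi \ge 0$ on $(0, R_0]$, which is precisely $(V_k/r^{\beta_k})' \ge 0$.

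The main obstacle is locating the correct monotone quantity. The exponent $\alpha$ is not the indicial exponent at the singular point $r = 0$ (that role is played by $\gamma_k$); instead, $\beta_k$ is the positive root of the indicial equation for the limit operator $\dv(r^2\,\nabla\,\cdot\,)$. The identity $\lambda_k - \beta_k(n - 3 + \beta_k) = 2\beta_k$ is exactly what makes the integrating factor $r^{n-3+\beta_k}(r^2+\varepsilon)$ yield a nonnegative right-hand side. Any other power of $r$ would destroy the sign, and in this sense $\alpha$ is pinpointed as the sharp decay exponent.
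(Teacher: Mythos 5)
Your proof is correct, and it takes a genuinely different route from the paper's. The paper also starts from the spherical-harmonics decomposition and the same ODE $L_k V_{k,i}=0$, but then argues by comparison: after scaling to $R=1$, it verifies $L_k r^{\alpha_k}\le 0$ (and $L_k r^{-c}\le 0$ for small $c>0$), and applies the maximum principle with the barrier $\gamma r^{-c}+|V_{k,i}(1)|r^{\alpha_k}$, letting $\gamma\to 0$, to get the pointwise two-sided bound $|V_{k,i}(r)|\le r^{\alpha_k}|V_{k,i}(1)|$. You instead exhibit the exact monotone quantity $W=r^{n-3+\beta_k}(\varepsilon+r^2)\bigl(rV_k'-\beta_k V_k\bigr)$ satisfying $W'=2\beta_k\varepsilon\,r^{n-4+\beta_k}V_k\ge 0$ (I have checked the algebra, which hinges on exactly the identity $\lambda_k-\beta_k(n-3+\beta_k)=2\beta_k$, the same relation that makes the paper's $L_k r^{\alpha_k}\le 0$ work), and deduce that $V_k/r^{\beta_k}$ is non-decreasing. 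That is strictly stronger than the paper's pointwise inequality and, as a bonus, gives $V_k'\ge 0$, a monotonicity fact the paper proves separately in Lemma~3.1. Your integrating-factor approach avoids the auxiliary barrier $r^{-c}$ and the limit $\gamma\to 0$; what it costs is a slightly more delicate preliminary step, namely establishing $V_k\ge 0$ (needed to make $pV_k'$ monotone) via the one-dimensionality of the bounded solution space at the Frobenius singular point and a first-zero argument, plus checking $W\to 0$ at $r=0$ from the $r^{\gamma_k}$ asymptotics. Both of these you state and they do hold (since $\gamma_k\ge 1>0$ and $n-3+\beta_k+\gamma_k>0$), so the argument is complete.
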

\begin{proof}
By the elliptic theory, $v_1 \in C^\infty(B_{R_0})$. Without loss of generality, we assume that $v_1(0) = 0$. By scaling, it suffices to prove the lemma for $R = 1$. Denote $y' = (r ,\xi) \in (0,1) \times \bS^{n-2}$. We can rewrite \eqref{equation_v1} as
$$
\partial_{rr} v_1 +\left(\frac {n-2}r +\frac{2r}{\varepsilon+r^2} \right)\partial_{r} v_1 +\frac 1 {r^2}\Delta_{\bS^{n-2}} v_1=0 \quad \mbox{in}~~B_1 \setminus \{0\}.
$$
Take the decomposition
\begin{equation}
\label{v1_expansion}
v_1(y') = \sum_{k=1}^\infty \sum_{i=1}^{N(k)} V_{k,i}(r)Y_{k,i}(\xi), \quad y' \in B_1\setminus\{0\},
\end{equation}
where $Y_{k,i}$ is a $k$-th degree spherical harmonics, that is,
$$
-\Delta_{\bS^{n-2}} Y_{k,i} = k(k+n-3)Y_{k,i}
$$ and $\{Y_{k,i}\}_{k,i}$ forms an orthonormal basis of $L^2(\bS^{n-2})$.
Here we used the fact that $V_{0,1}=0$ because $v_1(0)=0$.
Then $V_{k,i}(r) \in C^2(0,1)$ is given by
$$
V_{k,i}(r) = \int_{\bS^{n-2}} v_1(y') Y_{k,i}(\xi) \, d\xi,
$$
and satisfies
\begin{equation*}
L_k V_{k,i}:= V_{k,i}''(r) +\left(\frac {n-2}r +\frac{2r}{\varepsilon+r^2} \right)V_{k,i}'(r) - \frac {k(k+n-3)} {r^2} V_{k,i}(r)=0 \quad \mbox{in}~~(0,1)
\end{equation*}
for each $k \in \bN$, $i = 1, 2 ,\ldots, N(k)$. For any $k \in \bN$, let
$$
\alpha_k:= \frac {-(n-1)+\sqrt{(n-1)^2+4k(k+n-3)}}{2}.
$$
For any $c \in \bR$, we have, by a direct computation,
\begin{align*}
L_kr^{c}=r^{c-2}\Big(c^2+(n-3+2r^2/(\varepsilon+r^2) )c -k(k+n-3)\Big) \quad \mbox{in}~~(0,1).
\end{align*}
Thus for $c > 0$ sufficiently small, we have
$$
L_kr^{-c}\le 0 \quad
\mbox{and} \quad
L_k r^{\alpha_k}\le 0 \quad \mbox{in}~~(0,1).
$$
Therefore, for any $\gamma>0$,
$$
L_k(\pm V_{k,i}(r)-\gamma r^{-c} - |V_{k,i}(1)|r^{\alpha_k})\ge 0 \quad \mbox{in}~~(0,1).
$$
Since $v_1 \in L^\infty(B_{1})$, we know that $V_{k,i}(r)$ is bounded in $(0,1)$, so we have
$$
\pm V_{k,i}(r)-\gamma r^{-c} - |V_{k,i}(1)|r^{\alpha_k} < 0 \quad \mbox{as}~~r \searrow 0.
$$
Clearly,
$$
\pm V_{k,i}(r)-\gamma r^{-c} - |V_{k,i}(1)|r^{\alpha_k} < 0 \quad \mbox{when}~~r =1.
$$
By the maximum principle,
$$
|V_{k,i}(r)|\le \gamma r^{-c} + r^{\alpha_k} |V_{k,i}(1)| \quad \mbox{for}~~0<r<1.
$$
Sending $\gamma \to 0$, we have
\begin{equation}
\label{V_decay}
|V_{k,i}(r)|\le r^{\alpha_k} |V_{k,i}(1)| \quad \mbox{for}~~0<r<1.
\end{equation}
It follows from \eqref{v1_expansion} and \eqref{V_decay},
\begin{align*}
\fint_{\partial B_\rho} |v_1(y')|^2 \, d\sigma &= \sum_{k=1}^\infty \sum_{i=1}^{N(k)}  |V_{k,i}(\rho)|^2 \\
&\le \rho^{2\alpha} \sum_{k=1}^\infty \sum_{i=1}^{N(k)}|V_{k,i}(1)|^2 \,
= \rho^{2\alpha} \fint_{\partial B_1} |v_1(y')|^2 \, d\sigma.
\end{align*}
\end{proof}

\begin{lemma}
\label{lemma_v2}
For $n \ge 3$, $s \ge 0$, $1+\gamma - 2s > 0$, and $\varepsilon > 0$, suppose that $F,G \in L^\infty(B_{1})$ satisfy \eqref{FG_bound} with $R_0 =1$, and $v_2 \in H^1_0(B_{1})$ satisfies
\begin{equation}
\label{equation_v2}
\dv((\varepsilon+|y'|^2)\nabla v_2)=\dv F + G\quad\text{in}\,\,B_{1}\subset \bR^{n-1}.
\end{equation}
Then we have
$$
\|v_2\|_{L_\infty(B_{1})}\le C(\|F\|_{\varepsilon,\gamma,s, B_{1}} + \|G\|_{\varepsilon,\gamma-1,s,B_{1}}),
$$
where $C>0$ depends only on $n$, $\gamma$, and $s$, and is in particular independent of $\varepsilon$.
\end{lemma}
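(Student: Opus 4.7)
The plan is to establish the $L^\infty$ bound by comparison with an explicit radial supersolution, absorbing the divergence-form source $\dv F$ through the weak formulation and Young's inequality. By linearity I would first normalize $\|F\|_{\varepsilon,\gamma,s,B_1}+\|G\|_{\varepsilon,\gamma-1,s,B_1}\le 1$, reducing to the pointwise bounds $|F(y')|\le|y'|^{\gamma}(\varepsilon+|y'|^2)^{1-s}$ and $|G(y')|\le|y'|^{\gamma-1}(\varepsilon+|y'|^2)^{1-s}$; the goal then becomes $\|v_2\|_{L^\infty(B_1)}\le C(n,\gamma,s)$.

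Next I would construct the radial barrier
\[
W(r)=M\int_r^1\frac{1}{\rho^{n-2}(\varepsilon+\rho^2)}\int_0^\rho \sigma^{n-3+\gamma}(\varepsilon+\sigma^2)^{1-s}\,d\sigma\,d\rho,
\]
which is $C^2$ on $(0,1]$, nonnegative, radially decreasing, vanishes at $r=1$, and by a direct computation satisfies $-\dv((\varepsilon+|y'|^2)\nabla W)=M|y'|^{\gamma-1}(\varepsilon+|y'|^2)^{1-s}$ pointwise in $B_1\setminus\{0\}$. Splitting the outer integral at $\rho\sim\sqrt{\varepsilon}$ and using the standing hypothesis $1+\gamma-2s>0$ (under which the tail $\int_{\sqrt{\varepsilon}}^1 \rho^{\gamma-2s}\,d\rho$ is controlled uniformly in $\varepsilon$), one obtains $\sup_{[0,1]}W\le C(n,\gamma,s)M$.

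To deduce $|v_2|\le W$, I would test the weak formulation of the equation for $v_2-W$ against the Stampacchia truncation $\phi=(v_2-W)^+\in H^1_0(B_1)$. Young's inequality applied to the $F$-term, combined with the pointwise comparison
\[
\frac{|F|^2}{\varepsilon+|y'|^2}\le |y'|^{2\gamma}(\varepsilon+|y'|^2)^{1-2s}\le |y'|^{\gamma-1}(\varepsilon+|y'|^2)^{1-s}
\]
(the second inequality equivalent to $|y'|^{\gamma+1}\le(\varepsilon+|y'|^2)^s$, which holds precisely because $1+\gamma-2s\ge 0$), reduces the $F$-contribution to an effective source of the same scaling as the one absorbed by $W$. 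Choosing $M$ large enough and iterating the resulting weighted energy inequality at successive truncation levels, together with a weighted Sobolev inequality adapted to the weight $\varepsilon+|y'|^2$ on $\bR^{n-1}$, forces the measure of $\{v_2>W\}$ to vanish, hence $v_2\le W$; the symmetric argument gives $v_2\ge -W$.

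The main obstacle is closing the comparison from this Young-type energy inequality to the pointwise bound $|v_2|\le W$, because the divergence source $\dv F$ precludes a direct maximum-principle argument. The Stampacchia iteration demands an $\varepsilon$-uniform weighted Sobolev inequality on $B_1$, which is delicate when $n=3$: the weight $\varepsilon+|y'|^2$ on $\bR^{n-1}$ is not uniformly Muckenhoupt $A_2$ in that dimension, so one must localize to the regions $|y'|\lesssim\sqrt{\varepsilon}$ (where, after rescaling by $\sqrt{\varepsilon}$, the equation becomes uniformly elliptic on $B_1$) and $|y'|\gtrsim\sqrt{\varepsilon}$ (where $|y'|^2$ behaves like an $A_2$ weight) and glue the two regimes with a cut-off, or equivalently, exploit the explicit $\varepsilon$-uniform pointwise bounds for the Green's function of $\dv((\varepsilon+|y'|^2)\nabla\,\cdot)$ obtained from the same radial ODE used above.
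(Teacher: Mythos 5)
Your approach is genuinely different from the paper's, which runs a Moser iteration, and it contains a real gap at exactly the place you yourself flag.

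The barrier construction is sound: $W$ does satisfy $-\dv((\varepsilon+|y'|^2)\nabla W)=M|y'|^{\gamma-1}(\varepsilon+|y'|^2)^{1-s}$ by the computation $\dv((\varepsilon+r^2)\nabla W)=r^{-(n-2)}\big((\varepsilon+r^2)r^{n-2}W'\big)'$, and the uniform bound $\sup W\le C(n,\gamma,s)M$ is correct under $1+\gamma-2s>0$. The pointwise estimate $|F|^2/(\varepsilon+|y'|^2)\le |y'|^{\gamma-1}(\varepsilon+|y'|^2)^{1-s}$ also checks out. The trouble is what comes next. After testing $v_2-W$ against $\phi=(v_2-W)^+$ and applying Young's inequality, you arrive at an inequality of the form
\begin{equation*}
\tfrac12\int(\varepsilon+|y'|^2)|\nabla(v_2-W)^+|^2\le \tfrac12\int_{\{v_2>W\}}\frac{|F|^2}{\varepsilon+|y'|^2}-(M-1)\int|y'|^{\gamma-1}(\varepsilon+|y'|^2)^{1-s}(v_2-W)^+.
\end{equation*}
This does not force $(v_2-W)^+=0$: the first term on the right is an integral over the set $\{v_2>W\}$ not weighted by $(v_2-W)^+$, so it cannot be absorbed by the $M$-term, and no contradiction results if $\{v_2>W\}$ has positive measure. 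To turn this into a genuine conclusion you would need a full Stampacchia-type iteration with $W$-dependent levels, together with the $\varepsilon$-uniform weighted Sobolev inequality you mention — and, as you correctly observe, $\varepsilon+|y'|^2$ is not uniformly $A_2$ on $\bR^{n-1}$ when $n=3$, so this inequality is not available off the shelf. The localization/Green's-function workarounds you propose are plausible directions but are sketched rather than carried out, and they would require substantial additional work.

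The paper sidesteps all of this with a simple normalization: divide the equation by $\varepsilon+|y'|^2$ to obtain $\Delta v_2+\frac{2r}{\varepsilon+r^2}\partial_r v_2=\partial_i\big(F_i(\varepsilon+r^2)^{-1}\big)+2F_iy_i(\varepsilon+r^2)^{-2}+G(\varepsilon+r^2)^{-1}$, multiply by $-|v_2|^{p-2}v_2$, and observe that the drift contributes $\frac1p\int\partial_r\big(2r^{n-1}/(\varepsilon+r^2)\big)|v_2|^p\ge 0$ since $\partial_r\big(r^{n-1}/(\varepsilon+r^2)\big)=\frac{r^{n-2}[(n-1)\varepsilon+(n-3)r^2]}{(\varepsilon+r^2)^2}\ge 0$ for $n\ge3$. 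Dropping that good-sign term leaves only the unweighted Laplacian energy, so the ordinary Sobolev inequality on $\bR^{n-1}$ suffices (with a small $\delta$-adjustment in the exponents to keep $r^{\gamma-2s}$ integrable), and Moser's iteration closes cleanly and uniformly in $\varepsilon$. If you want to salvage your barrier approach, I would suggest importing exactly this trick: divide by the weight first, exploit the good sign of the drift, and then run the Stampacchia iteration (or drop the barrier and iterate directly on $v_2$) with the unweighted gradient; that removes the $n=3$ obstruction entirely.
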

\begin{proof}
Without loss of generality, we assume $\|F\|_{\varepsilon,\gamma,s,B_1}+ \|G\|_{\varepsilon,\gamma-1,s,B_{1}} = 1$. Denote $r = |y'|$. We can rewrite \eqref{equation_v2} as
\begin{equation}
\label{equation_v2_2}
\Delta v_2+\frac{2r}{\varepsilon+r^2}\partial_r v_2=\partial_i (F_i(\varepsilon+r^2)^{-1})+2F_iy_i(\varepsilon+r^2)^{-2} + G(\varepsilon + r^2)^{-1}\quad\text{in}\,\,B_{1}.
\end{equation}
We use Moser's iteration argument. By the definitions,
\begin{align*}
|F_i(\varepsilon+r^2)^{-1}|&\le  r^{\gamma-2s}\|F\|_{\varepsilon,\gamma,s,B_1},\\
|F_iy_i(\varepsilon+r^2)^{-2}| &\le  r^{\gamma-2s-1}\|F\|_{\varepsilon,\gamma,s,B_1},\\
|G(\varepsilon + r^2)^{-1}| &\le  r^{\gamma-2s-1}\|G\|_{\varepsilon,\gamma-1,s,B_1}.
\end{align*}
For $p \ge 2$, we multiply the equation \eqref{equation_v2_2} with $-|v_2|^{p-2}v_2$ and integrate by parts to obtain
\begin{align*}
&(p-1) \int_{B_{1}} |\nabla v_2|^{2}|v_2|^{p-2}\,dy'
-\int_{B_{1}}\frac{2r}{\varepsilon+r^2}\partial_r v_2(|v_2|^{p-2}v_2) \,dy'\\
&\le C(p-1)\int_{B_{1}}|\nabla v_2||v_2|^{p-2} r^{\gamma-2s}\,dy' + C \int_{B_{1}} |v_2|^{p-1} r^{\gamma - 2s - 1}.
\end{align*}
The second term on the left-hand side is equal to
\begin{align*}
-\frac 1 p\int_{\bS^{n-2}} \int_{0}^{1}\frac{2r^{n-1}}{\varepsilon+r^2}\partial_r |v_2|^{p}  \,drd\theta =\frac {1} p \int_{\bS^{n-2}}\int_{0}^{1}\partial_r \Big(\frac{2r^{n-1}}{\varepsilon+r^2}\Big) |v_2|^{p}\,dr d\theta \ge 0.
\end{align*}
Therefore, by H\"older's inequality and using $1+\gamma-2s > 0$,
\begin{align*}
&(p-1) \int_{B_{1}} |\nabla v_2|^{2}|v_2|^{p-2}\,dy'\nonumber\\
&\le C(p-1) \||\nabla v_2||v_2|^{\frac{p-2}{2}} \|_{L^2(B_{1})} \|v_2^{p-2}\|_{L^{\frac{n-1+2\delta}{n-3+2\delta}}(B_{1})}^{1/2} \| r^{2(\gamma - 2s)} \|_{L^{\frac{n-1}{2} + \delta}(B_{1})}^{1/2}\nonumber\\
 &\quad + C\| v_2^{p-1}\|_{L^{\frac{n-1+2\delta}{n-3+2\delta}}(B_{1})}  \| r^{\gamma - 2s -1} \|_{L^{\frac{n-1}{2} + \delta}(B_{1})},
\end{align*}
where $\delta > 0$ is chosen sufficiently small so that
$$ \| r^{2(\gamma - 2s)} \|_{L^{\frac{n-1}{2} + \delta}(B_{1})} +  \| r^{\gamma - 2s -1} \|_{L^{\frac{n-1}{2} + \delta}(B_{1})} < \infty.$$
By Young's inequality,
\begin{align}
&\frac{p-1}{2} \int_{B_{1}} |\nabla v_2|^{2}|v_2|^{p-2}\,dy'\notag \\
&\le C(p-1) \|v_2^{p-2}\|_{L^{\frac{n-1+2\delta}{n-3+2\delta}}(B_{1})} \| r^{2(\gamma - 2s)} \|_{L^{\frac{n-1}{2} + \delta}(B_{1})}\nonumber\\
&\quad + C\| v_2^{p-1}\|_{L^{\frac{n-1+2\delta}{n-3+2\delta}}(B_{1})}  \| r^{\gamma - 2s -1} \|_{L^{\frac{n-1}{2} + \delta}(B_{1})}. \label{moser_inequality1}
\end{align}
Taking $p = 2$ in the above, we have, by H\"older's inequality,
\begin{align*}
\int_{B_{1}} |\nabla v_2|^{2} dy' &\le C+ C \| v_2\|_{L^{\frac{n-1+2\delta}{n-3+2\delta}}(B_{1})} \le C + C \|v_2\|_{L^{\frac{2(n-1+2\delta)}{n-3+2\delta}}(B_{1})}.
\end{align*}
Applying the Sobolev-Poincar\'e inequality on the left-hand side, we have
\begin{equation}
\label{starting_point}
\|v_2\|_{L^{\frac{2(n-1+2\delta)}{n-3+2\delta}}(B_{1})} \le C.
\end{equation}
From \eqref{moser_inequality1}, by H\"older's inequality,
\begin{align*}
\frac{4(p-1)}{p^2} \int_{B_{1}}\left| \nabla |v_2|^{\frac{p}{2}} \right|^2\,dy' &= (p-1)\int_{B_{1}}|\nabla v_2|^{2}|v_2|^{p-2}\,dy'\\
&\le Cp \| v_2\|^{p-2}_{L^{\frac{(n-1+2\delta)p}{n-3+2\delta}}(B_{1})} + C\| v_2\|^{p-1}_{L^{\frac{(n-1+2\delta)p}{n-3+2\delta}}(B_{1})},
\end{align*}
which implies that
\begin{align*}
\|\nabla |v_2|^{\frac{p}{2}}\|_{L^2(B_1)}^{2/p} \le \max_{i \in \{1,2\}} \left( Cp^{i} \right)^{1/p} \| v_2\|^{(p-i)/p}_{L^{\frac{(n-1+2\delta)p}{n-3+2\delta}}(B_{1})}.
\end{align*}
Then by the Sobolev inequality 
and Young's inequality, we have
\begin{align*}
\|v_2\|_{L^{tp}(B_{1})} \le & \max_{i \in \{1,2\}} \left( Cp^i \right)^{1/p} \left(\frac{p-i}{p} \|v_2\|_{L^{\frac{(n-1+2\delta)p}{n-3+2\delta}}(B_{1})} + \frac{i}{p} \right)\\
\le &  \left( Cp^2 \right)^{1/p} \left( \|v_2\|_{L^{\frac{(n-1+2\delta)p}{n-3+2\delta}}(B_{1})} + \frac{2}{p} \right)
  \quad \mbox{when}~~n=3
\end{align*}
for any $t > \frac{n-1+2\delta}{n-3+2\delta}$
and,
$$
\|v_2\|_{L^{\frac{(n-1)p}{n-3}}(B_{1})} \le \left( Cp^2 \right)^{1/p} \left( \|v_2\|_{L^{\frac{(n-1+2\delta)p}{n-3+2\delta}}(B_{1})} + \frac{2}{p} \right) \quad \mbox{when}~~n > 3.
$$
For $k \ge 0$, let
$$
p_k = 2\left( t \frac{n-3+2\delta}{(n-1+2\delta)} \right)^k \frac{n-1+2\delta}{n-3+2\delta} \quad \mbox{when}~~n = 3,
$$
and
$$
p_k = 2 \left( \frac{n-1}{n-3} \frac{n-3+2\delta}{(n-1+2\delta)} \right)^k \frac{n-1+2\delta}{n-3+2\delta} \quad \mbox{when}~~n > 3.
$$ Iterating the relations above, we have, by \eqref{starting_point},
\begin{align}
\label{Moser_iteration}
\|v_2\|_{L^{p_k}} &\le \prod_{i=0}^{k-1} \left( Cp_i^2 \right)^{1/p_i}\|v_2\|_{L^{p_0}(B_{1})} + \sum_{i=0}^{k-1} \prod_{j=0}^{k-1-i} \left( Cp_{k-1-j}^2 \right)^{1/p_{k-1-j}} \frac{2}{p_i} \nonumber\\
&\le C \|v_2\|_{L^{\frac{2(n-1+2\delta)}{n-3+2\delta}}(B_{1})} + C \le C,
\end{align}
where $C$ is a positive constant depending on $n$, $\gamma$, and $s$, and is in particular independent of $k$. The lemma is concluded by taking $k \to \infty$ in \eqref{Moser_iteration}.
\end{proof}

\begin{proof}[Proof of Proposition \ref{prop_grad_v_bar_control}]
Without loss of generality, we assume that $\bar{v}(0) = 0$ and $$
\|F\|_{\varepsilon, \gamma,s,B_{R_0}}+\|G\|_{\varepsilon, \gamma-1,s,B_{R_0}} +  \| \bar v  \|_{L^2(\partial B_{R_0})}=1.
$$
Consider
$$
\omega(\rho):=\Big(\fint_{\partial B_\rho}|\bar v|^2\,d\sigma\Big)^{1/2}.
$$
For $0 < \rho \le R/2 \le R_0/2$, we write $\bar v=v_1+v_2$ in $B_R$, where $v_2$ satisfies
$$
\dv((\varepsilon+|y'|^2)\nabla v_2)=\dv F + G\quad\text{in}\,\,B_R
$$
and $v_2=0$ on $\partial B_R$. Thus $v_1$ satisfies
$$
\dv((\varepsilon+|y'|^2)\nabla v_1)=0\quad\text{in}\,\,B_R,
$$
and $v_1=\bar v$ on $\partial B_R$. By Lemma \ref{lemma_v1},
\begin{equation}
\label{v1_control}
\left( \fint_{\partial B_\rho} |v_1(y') - v_1(0)|^2 \, d\sigma \right)^{\frac{1}{2}} \le \left(\frac{\rho}{R} \right)^{\alpha} \left( \fint_{\partial B_R} |v_1(y') - v_1(0)|^2 \, d\sigma \right)^{\frac{1}{2}}.
\end{equation}
Since $\tilde v_2(y'):=v_2(Ry')$ satisfies
$$
\dv((R^{-2}\varepsilon+|y'|^2)\nabla \tilde v_2)
=\dv \tilde F + \tilde G\quad\text{in}\,\,B_1,
$$
where $\tilde F(y'):=R^{-1}F(Ry')$ and $\tilde{G}(y') := G(Ry')$ satisfy
\begin{align*}
\|\tilde{F}\|_{R^{-2}\varepsilon,\gamma,s,B_1} &= R^{1+ \gamma - 2s} \|F\|_{\varepsilon, \gamma,s,B_{R}}, \\
\|\tilde G\|_{R^{-2}\varepsilon, \gamma-1,s,B_{1}} &= R^{1+ \gamma - 2s} \|G\|_{\varepsilon, \gamma-1,s,B_{R}},
\end{align*}
we apply Lemma \ref{lemma_v2} to $\tilde{v}_2$ with $\varepsilon$ replaced with $R^{-2}\varepsilon$ to obtain
\begin{equation}
\label{v2_control}
\|v_2\|_{L_\infty(B_R)}\le CR^{1+\gamma-2s}.
\end{equation}
Since $\bar{v}(0) = v_1(0) + v_2(0) = 0$, we have $|v_1(0)|=|v_2(0)|$. Combining \eqref{v1_control} and \eqref{v2_control} yields, using $\bar{v} = v_1 + v_2$, and $\bar{v} = v_1$ on $\partial B_R$,
\begin{align}
\label{omega_iteration}
\omega(\rho)&\le \left( \fint_{\partial B_\rho} |v_1(y') - v_1(0)|^2 \, d\sigma \right)^{\frac{1}{2}} + \left( \fint_{\partial B_\rho} |v_2(y') - v_2(0)|^2 \, d\sigma \right)^{\frac{1}{2}}\nonumber\\
&\le \left(\frac{\rho}{R} \right)^{\alpha} \left( \fint_{\partial B_R} |v_1(y')|^2 \, d\sigma \right)^{\frac{1}{2}} + \left(\frac{\rho}{R} \right)^{\alpha} |v_1(0)| + 2\|v_2\|_{L_\infty(B_R)}\nonumber\\
&\le \left(\frac{\rho}{R} \right)^{\alpha} \omega(R) + CR^{1+ \gamma - 2s}.
\end{align}
For a positive integer $k$, we take $\rho = 2^{-i-1}R_0$ and $R = 2^{-i}R_0$ in \eqref{omega_iteration} and iterate from $i = 0$ to $k-1$. We have, using $1 + \gamma - 2s \neq \alpha$,
\begin{align*}
\omega(2^{-k}R_0) &\le 2^{-k\alpha} \omega(R_0) + C\sum_{i=1}^k 2^{-(k-i)\alpha} (2^{1-i}R_0)^{1+\gamma - 2s}\\
&\le 2^{-k\alpha} \omega(R_0) + C2^{-k\alpha} R_0^{1+\gamma - 2s} \frac{1 - 2^{k(\alpha -1-\gamma+2s)}}{1 - 2^{\alpha - 1 - \gamma + 2s}}.
\end{align*}
It follows that
$$
\omega(2^{-k}R_0) \le 2^{-k\tilde\alpha} \left( \omega(R_0) + CR_0^{1 + \gamma - 2s}\right).
$$
For any $\rho \in (0, R_0/2)$, let $k$ be the integer such that $2^{-k-1}R_0 < \rho \le 2^{-k}R_0$. Then
\begin{equation*}
\omega(\rho) \le C \rho^{\tilde\alpha}, \quad \forall \rho \in (0, R_0/2).
\end{equation*}
Therefore, \eqref{v_bar_L2_sphere} is proved.
\end{proof}

\begin{proof}[Proof of Theorem \ref{radial_thm}]
Without loss of generality, we assume that $a= 1$, $u(0) = 0$ and $\|u\|_{L^\infty(\Omega_{R_0})} = 1$. We make the change of variables \eqref{x_to_y}, and let $v(y) = u(x)$. Then $v$ satisfies \eqref{equation_v}. Let $\bar{v}$ be defined as in \eqref{v_bar_def}. By \eqref{grad_u_bound_rough},
$$\|\nabla \bar{v}(y')\|_{\varepsilon,0,s_0+1,B_{R_0}} < \infty,$$
where $s_0 = \frac{1}{2}$. Then $\bar{v}$ satisfies the equation \eqref{equation_v_bar_2} with $F$ satisfying
$$\|F\|_{\varepsilon, \gamma - 2s_0, 0, B_{R_0}} < \infty.$$
By \eqref{grad_n_v_bound},
\begin{equation}
\label{v-v_bar}
|v(y', y_n) - \bar{v}(y')| \le 2\varepsilon \max_{y_n\in (- \varepsilon , \varepsilon)} |\partial_n v(y',y_n)| \le C (\varepsilon + |y'|^2) \quad \mbox{in}~~Q_{R_0,\varepsilon}.
\end{equation}
By decreasing $\gamma$ if necessary, we may assume that $1+\gamma-2s_0=\gamma<\alpha$.
By Proposition \ref{prop_grad_v_bar_control} and \eqref{v-v_bar}, we have
\begin{align*}
\fint_{Q_{2\varepsilon^{1/2} , \varepsilon}} |v - \bar{v}(0)|^2 \, dy &\le C\fint_{Q_{2\varepsilon^{1/2} , \varepsilon}} |v - \bar{v}|^2 \, dy + C \fint_{Q_{2\varepsilon^{1/2} , \varepsilon}} |\bar{v} - \bar{v}(0)|^2 \, dy \le C \varepsilon^{\tilde\alpha},
\end{align*}
where $\tilde{\alpha} = \min\{\alpha , 1+ \gamma - 2s_0 \}$.
Let $\tilde{a}^{ij}(y) = \varepsilon^{-1} a^{ij}(\varepsilon^{1/2} y)$ and $\tilde{v}(y) = v(\varepsilon^{1/2} y) - \bar{v}(0)$. Then $\tilde{v}$ satisfies
\begin{equation*}
\left\{
\begin{aligned}
-\partial_i(\tilde a^{ij}(y) \partial_j \tilde v(y)) &=0 \quad \mbox{in } Q_{2, \varepsilon^{1/2}},\\
\tilde a^{nj}(y) \partial_j \tilde v(y) &= 0 \quad \mbox{on } \{y_n = -\varepsilon^{1/2}\} \cup \{y_n = \varepsilon^{1/2}\}.
\end{aligned}
\right.
\end{equation*}
For any $\mu \in(0,1)$, it is straightforward to verify that
$$
\frac{I}{C} \le \tilde{a} \le CI \quad \mbox{and} \quad \| \tilde{a} \|_{C^{\mu} (Q_{2,\varepsilon^{1/2}})} \le C.
$$
Now we define
$$
S_l:= \left\{y \in \bR^n ~\big|~  |y'| < 2,~ (2l-1) \varepsilon^{1/2} < y_n < (2l+1) \varepsilon^{1/2} \right\}
$$
for any integer $l$, and
$$
S: = \left\{y \in \bR^n ~\big|~  |y'| < 2,~ |y_n| < 2\right\}.
$$
Note that $Q_{2, \varepsilon^{1/2}} = S_0$. We take the even extension of $\tilde v$ with respect to $y_n=\varepsilon^{1/2}$ and then take the periodic extension (so that the period is equal to $4\varepsilon^{1/2}$).
More precisely,
we define, for any $l \in \bZ$, a new function $\hat{v}$ by setting
$$\hat{v}(y) := \tilde{v}\left(y', (-1)^l\left(y_n - 2l \varepsilon^{1/2}\right)\right), \quad \forall y \in S_l.$$
We also define the corresponding coefficients, for $k = 1,2, \ldots, n-1$,
$$\hat{a}^{nk}(y)=\hat{a}^{kn}(y) := (-1)^l\tilde{a}^{kn}\left(y', (-1)^l\left(y_n - 2l \varepsilon^{1/2}\right)\right),  \quad \forall y \in S_l,$$
and for other indices,
$$\hat{a}^{ij}(y) := \tilde{a}^{ij}\left(y', (-1)^l\left(y_n - 2l \varepsilon^{1/2}\right)\right), \quad \forall y \in S_l.$$
Then $\hat{v}$ and $\hat{a}^{ij}$ are defined in the infinite cylinder $Q_{2, \infty}$. In particular, by using the conormal boundary conditions, it is easily seen that $\hat{v}$ satisfies the equation
$$
\partial_i (\hat{a}^{ij} \partial_j \hat{v}) = 0 \quad \mbox{in}\,\,S.
$$
By \cite{LN}*{Proposition 4.1} and \cite{LY2}*{Lemma 2.1},
$$
\| \nabla \hat{v} \|_{L^\infty(\frac{1}{2}S)} \le C \| \hat{v}\|_{L^2(S)} \le C \varepsilon^{\frac{\tilde\alpha}{2}},
$$
which implies, after reversing the changes of variables,
$$
\| \nabla u\|_{L^\infty (\Omega_{\varepsilon^{1/2}})} \le C\varepsilon^{\frac{\tilde\alpha-1}{2}}.
$$
For any $R \in (\varepsilon^{1/2}, R_0/4)$, by Proposition \ref{prop_grad_v_bar_control} and \eqref{v-v_bar}, we have
$$
\fint_{Q_{4R, \varepsilon} \setminus Q_{R/2, \varepsilon}} |v - \bar{v}(0)|^2 \, dy \le C R^{2\tilde\alpha}.
$$
This implies
$$
\fint_{\Omega_{4R} \setminus \Omega_{R/2}} |u - \bar{v}(0)|^2 \, dx \le C R^{2\tilde\alpha}.
$$
We make a change of variables by setting
\begin{equation*}\label{x_to_z}
\left\{
\begin{aligned}
z' &= x' ,\\
z_n &=  2 R^2 \left( \frac{x_n - g(x') + \varepsilon/2}{\varepsilon + f(x') - g(x')} - \frac{1}{2} \right),
\end{aligned}\right.
\quad \forall (x',x_n) \in \Omega_{4R} \setminus \Omega_{R/2}.
\end{equation*}
This change of variables maps the domain $\Omega_{4R} \setminus \Omega_{R/2}$ to $Q_{4R, R^2} \setminus Q_{R/2 , R^2}$. Let $w(z) = u(x) - \bar{v}(0)$, so that $w(z)$ satisfies
\begin{equation*}
\left\{
\begin{aligned}
-\partial_i(b^{ij}(z) \partial_j w(z)) &=0 \quad \mbox{in } Q_{4R, R^2} \setminus Q_{R/2 , R^2},\\
 b^{nj}(z) \partial_j w(z) &= 0 \quad \mbox{on } \{z_n = -R^2\} \cup \{z_n = R^2\},
\end{aligned}
\right.
\end{equation*}
where
$$
(b^{ij}(z)) = \frac{(\partial_x z)(\partial_x z)^t}{\det (\partial_x z)}.
$$
It is straightforward to verify that
$$
\frac{I}{C} \le b(z) \le CI \quad \mbox{and}\quad \|b\|_{C^\mu (Q_{4R, R^2} \setminus Q_{R/2 , R^2})} \le CR^{-\mu}
$$ for any $\mu > 0$. We can apply the ``flipping argument" as above to get
$$
\| \nabla w \|_{L^\infty(Q_{2R, R^2} \setminus Q_{R , R^2})} \le CR^{\tilde\alpha-1},
$$
which implies
$$
\| \nabla u\|_{L^\infty(\Omega_{2R} \setminus \Omega_R)} \le CR^{\tilde\alpha-1}
$$
for any $R \in (\varepsilon^{1/2}, R_0/4)$. Therefore, we have improved the upper bound $|\nabla u(x)| \le C(\varepsilon + |x'|^2)^{-s_0}$ to $|\nabla u(x)| \le C(\varepsilon + |x'|^2)^{\frac{\tilde{\alpha}-1}{2}}$, where $\frac{\tilde{\alpha}-1}{2} = \min\left\{\frac{\alpha - 1}{2}, -s_0 + \frac{\gamma}{2} \right\}$. If $-s_0 + \frac{\gamma}{2} < \frac{\alpha - 1}{2}$, we take $s_1 = s_0 - \frac{\gamma}{2}$ and repeat the argument above. We may decrease $\gamma$ if necessary so that $\frac{\alpha - 1}{2}\neq -s_0 +k\frac \gamma 2$ for any $k=1,2,\ldots$.
After repeating the argument finite times, we obtain the estimate \eqref{main_goal}.
\end{proof}

\section{Optimality}
In this section, we prove Theorem \ref{optimality}. We will make use of the following lemma.

\begin{lemma}
\label{ODE_lemma}
For $\varepsilon > 0$, there exists a unique solution $h \in L^\infty((0,1)) \cap C^\infty((0,1])$ of
\begin{equation}
\label{Homogeneous_ODE}
Lh :=h''(r) + \left( \frac{n-2}{r} + \frac{2r}{\varepsilon + r^2} \right) h'(r) - \frac{n-2}{r^2} h(r) = 0, \quad 0 < r <1
\end{equation}
satisfying $h(1) = 1$.
Moreover, $h \in C([0,1])$, $h(0) = 0$, and for $\beta \ge \frac{2\alpha^2 + \alpha(n-1)}{n-3+\alpha}$, there exist positive constants $C(\varepsilon)$ and $C(\beta)$ such that
\begin{equation}
\label{h_bounds}
r  < h(r) < \min\{ C(\varepsilon)r, r^\alpha\}~ \mbox{and}~ h(r) \ge \frac{1}{C(\beta)} r^\beta (\varepsilon + r^2)^{\frac{\alpha - \beta}{2}} \quad \mbox{for}~~0 < r < 1,
\end{equation}
where $\alpha$ is given by \eqref{alpha_n} and $h$ is strictly increasing in $[0,1]$.
\end{lemma}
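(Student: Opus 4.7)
The plan is to combine a Frobenius analysis at the regular singular point $r = 0$ with sub- and super-solution comparison arguments, using that $L$ has zero-order coefficient $-\tfrac{n-2}{r^{2}} \le 0$, so the usual maximum principles apply. Near $r = 0$ the leading part of $L$ is $\partial_{r}^{2} + \tfrac{n-2}{r}\partial_{r} - \tfrac{n-2}{r^{2}}$, with indicial equation $c^{2} + (n-3)c - (n-2) = 0$ and roots $c_{1} = 1$, $c_{2} = -(n-2)$; the one-dimensional space of $L^{\infty}((0,1))$-solutions consists of functions expandable as $C_{1}(\varepsilon)\, r + O(r^{2})$ near $0$, and normalizing $h(1) = 1$ picks out a unique such $h$. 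Smooth-coefficient ODE theory on $(0,1]$ yields $h \in C^{\infty}((0,1]) \cap C([0,1])$ with $h(0) = 0$, and continuity of $h/r$ on $[0,1]$ gives the upper bound $h(r) \le C(\varepsilon)\, r$ throughout $(0,1]$.

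For the upper bound $h(r) < r^{\alpha}$, using $\alpha^{2} + (n-3)\alpha - (n-2) = -2\alpha$ (from \eqref{alpha_n}), a direct calculation gives
\[
L(r^{\alpha}) \;=\; r^{\alpha - 2}\bigl[\alpha^{2} + (n-3)\alpha - (n-2)\bigr] + \frac{2\alpha\, r^{\alpha}}{\varepsilon + r^{2}} \;=\; -\frac{2\alpha\,\varepsilon\, r^{\alpha - 2}}{\varepsilon + r^{2}} \;<\; 0,
\]
so $r^{\alpha}$ is a strict supersolution agreeing with $h$ at both endpoints, and the (strong) maximum principle yields $h < r^{\alpha}$ on $(0,1)$. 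Analogously, $L(r) = \tfrac{2r}{\varepsilon + r^{2}} > 0$ makes $r$ a strict subsolution, giving $h > r$ on $(0,1)$. For strict monotonicity, any interior critical point $r_{0}$ with $h(r_{0}) > 0$ would force $h''(r_{0}) = \tfrac{n-2}{r_{0}^{2}}\, h(r_{0}) > 0$, making every such critical point a strict local minimum; combined with $h(0) = 0 < h(1) = 1$ and $h > 0$ on $(0,1]$, this rules out any critical point, so $h' > 0$ on $[0,1]$.

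The main step is the lower bound $h \ge C(\beta)^{-1}\phi$ with $\phi(r) := r^{\beta}(\varepsilon + r^{2})^{(\alpha - \beta)/2}$. Setting $\gamma := (\alpha - \beta)/2$ so that $\beta + 2\gamma = \alpha$, one computes from $\phi'/\phi = \beta/r + 2\gamma\, r/(\varepsilon + r^{2})$ that
\[
\frac{L\phi}{\phi} \;=\; \frac{\varepsilon\bigl[A\varepsilon + (2A + B)\, r^{2}\bigr]}{r^{2}(\varepsilon + r^{2})^{2}},
\]
where $A = \beta^{2} + (n-3)\beta - (n-2) = (\beta - 1)(\beta + n - 2)$ and $B$ is an explicit polynomial in $\beta, \gamma, n$; the cancellation of the $r^{4}$-term in the numerator relies on the identity $\alpha^{2} + (n-1)\alpha = n - 2$. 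The hypothesis $\beta \ge \tfrac{2\alpha^{2} + (n-1)\alpha}{n - 3 + \alpha}$ forces $\beta > 1 > \alpha$, so $A > 0$, and also implies $2A + B \ge 0$, giving $L\phi \ge 0$ uniformly in $\varepsilon$. The boundary comparison is straightforward: $\phi(1) = (1 + \varepsilon)^{(\alpha - \beta)/2} \le 1 = h(1)$ since $\beta > \alpha$, while $h(r) \ge r$ and $\beta > 1$ yield $\phi(r)/h(r) \le r^{\beta - 1}\varepsilon^{(\alpha - \beta)/2} \to 0$ as $r \to 0^{+}$. Applying the maximum principle to $\phi - C(\beta)\, h$ on $[\delta, 1]$ and passing $\delta \to 0^{+}$ produces the claimed lower bound, with $C(\beta)$ depending only on $n$ and $\beta$. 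The delicate point is the algebraic verification that the stated threshold on $\beta$ enforces $2A + B \ge 0$ uniformly in $\varepsilon$; once this cancellation is in hand, all other steps are standard applications of the maximum principle.
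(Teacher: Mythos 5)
Your proposal is correct and follows essentially the same barrier/comparison strategy as the paper, but with two genuine (and valid) differences worth noting.

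For existence and uniqueness, you argue via Frobenius analysis at the regular singular point $r=0$: the indicial roots $1$ and $-(n-2)$ differ by the integer $n-1\ge 2$, so the bounded local solutions form a one-dimensional space spanned by a function $\sim r$ near $0$, which then extends smoothly to $(0,1]$ and is normalized at $r=1$. The paper instead constructs $h$ as a limit of solutions $h_a$ on $[a,1]$ with boundary data $h_a(a)=a$, $h_a(1)=1$ (sandwiched between $r$ and $r^\alpha$ before passing to the limit), and proves uniqueness separately via reduction of order, obtaining the representation $h_2 = C_0 h\int_r^1 \bigl(h^2 s^{n-2}(\varepsilon+s^2)\bigr)^{-1}\,ds + C_1 h$ and forcing $C_0=0$ using boundedness. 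Your Frobenius route is cleaner for existence and gives the $h(r)\le C(\varepsilon)r$ bound for free from the leading-order asymptotics, whereas the paper gets it from a separate supersolution $v=r(1-r^{1/2}/2)$. Both routes also give the strict $r<h<r^\alpha$ bounds and the monotonicity argument identically.

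For the lower bound, your expression $L\phi/\phi = \varepsilon\bigl[A\varepsilon+(2A+B)r^2\bigr]/\bigl(r^2(\varepsilon+r^2)^2\bigr)$ with $A=(\beta-1)(\beta+n-2)$ is an equivalent repackaging of the paper's observation that $LU = r^{\beta-2}(\varepsilon+r^2)^{(\alpha-\beta)/2}p(x)$ with $p(1)=0$ where $x=r^2/(\varepsilon+r^2)$: the vanishing of the $r^4$-coefficient (your cancellation) is exactly the identity $p(1)=0$, and both rely on $\alpha^2+(n-1)\alpha=n-2$. Your conditions $A>0$ and $2A+B\ge 0$ are exactly $p(0)\ge 0$ and ``the other root of $p$ is $\ge 1$''; the paper uses the slightly slicker criterion $p'(1)\le 0$ (monotone $p$). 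You assert but do not carry out the verification that the stated threshold on $\beta$ implies $2A+B\ge 0$; this is the one step you flag as ``delicate,'' and while it is only a short computation (expanding $(\beta-1)(\beta+n-2)\ge(\beta-\alpha)^2$ and substituting $\alpha^2+(n-1)\alpha=n-2$), you should complete it, as this is the single place where the precise value of the threshold enters. Your endpoint comparison for the maximum principle (using $h\ge r$ and $\beta>1$ to kill the contribution near $r=\delta$, and $(1+\varepsilon)^{(\alpha-\beta)/2}\le 1$ at $r=1$) is correct and matches what the paper leaves implicit.
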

\begin{proof}
For $0 < a < 1$, let $h_a \in C^2([a,1])$ be the solution of $Lh_a = 0$ in $(a, 1)$ satisfying $h_a(a) = a$ and $h_a(1) = 1$. Since $Lr > 0$ and $Lr^\alpha < 0$ in $(0,1)$, by the maximum principle and the strong maximum principle,
$$
r < h_a(r) < r^\alpha, \quad a < r < 1.
$$
Sending $a \to 0$ along a subsequence, $h_a \to h$ in $C^2_{\text{loc}}((0,1])$ for some $h \in C([0,1]) \cap C^\infty((0,1])$ satisfying $r \le h(r) \le r^\alpha$, $Lh = 0$ in $(0,1)$, and $h(0) = 0$. By the strong maximum principle,
$$
r < h(r) < r^\alpha, \quad 0 < r < 1.
$$
Let $v = r(1 - r^{1/2}/2)$, by a direct computation,
$$
Lv = -\frac{1}{4}\left( n - \frac{1}{2} \right) r^{-\frac{1}{2}} + \frac{1}{\varepsilon}O(r)  \quad \mbox{as}~~r \to 0.
$$
Hence $Lv < 0$ in $(0, r_0(\varepsilon))$, for some small $r_0(\varepsilon)$. By the maximum principle, we have $h \le C(\varepsilon) v \le C(\varepsilon)r$ in $(0,r_0(\varepsilon))$ for some constant $C(\varepsilon)$.

For $\beta \in \bR$, let $U(r) = r^\beta (\varepsilon + r^2)^{\frac{\alpha - \beta}{2}}$. By a direct computation,
\begin{align*}
LU &= r^{\beta-2}(\varepsilon + r^2)^{\frac{\alpha - \beta}{2}}\left\{ (\beta-\alpha)^2 \left( \frac{r^2}{\varepsilon + r^2} \right)^2 \right. \\
&\left. + [(2\beta + n -1)(\alpha - \beta) + 2\beta] \left( \frac{r^2}{\varepsilon + r^2} \right) + (n-2+\beta)(\beta-1) \right\}, \quad 0 < r < 1.
\end{align*}
Consider the second order polynomial
$$
p(x):= (\beta-\alpha)^2 x^2 + [(2\beta + n -1)(\alpha - \beta) + 2\beta]x +(n-2+\beta)(\beta-1), \quad x \in [0,1].
$$
Since $p(1) = 0$, a sufficient condition for $LU \ge 0$ in $(0,1)$ is
$$
p'(x) = 2(\beta - \alpha)^2x + (2\beta + n -1)(\alpha - \beta) + 2\beta \le 2(\beta - \alpha)^2 + (2\beta + n -1)(\alpha - \beta) + 2\beta \le 0.
$$
This is equivalent to $\beta \ge \frac{2\alpha^2 + \alpha(n-1)}{n-3+\alpha}$. Therefore, $U$ is a subsolution of \eqref{Homogeneous_ODE} when $\beta \ge \frac{2\alpha^2 + \alpha(n-1)}{n-3+\alpha}$. Estimate \eqref{h_bounds} follows.

Next we show $h$ is strictly increasing in $(0,1)$. If not, there exists an $r_0 \in (0,1)$ such that $h'(r_0) = 0$ and $h''(r_0) \le 0$. Since $h(r_0) > 0$, we have $Lh(r_0) < 0$, a contradiction.

Finally, we show the uniqueness of $h$. Let $h_2\in L^\infty((0,1)) \cap C^\infty((0,1])$ be a solution of \eqref{Homogeneous_ODE} in $(0,1)$ satisfying $h_2(1) = 1$. Then $w(r) := h_2(r) / h(r)$ satisfies
$$
(G w')' = 0, \quad 0<r<1,
$$
where $G = h^2 r^{n-2}(\varepsilon + r^2)$.
Therefore, for some constants $C_0$ and $C_1$, we have
$$
h_2(r) = h(r) w(r) = h(r) \int_{r}^1 \frac{C_0}{h^2(s) s^{n-2} (\varepsilon + s^2)} \, ds + C_1 h(r), \quad 0 < r <  1.
$$
By the first inequality in \eqref{h_bounds},
$$
h(r) \int_r^1 \frac{1}{h^2(s) s^{n-2} (\varepsilon + s^2)} \, ds \to +\infty
$$
as $r \to 0$. Therefore, since $h_2$ and $h$ are bounded, $C_0= 0$, $C_1 = 1$, and $h_2 = h$.
\end{proof}

\begin{proof}[Proof of Theorem \ref{optimality}]
\textbf{Step 1.} By the elliptic theory, the fact that $\widetilde\Omega$ is symmetric in $x_1$, and the fact that $\varphi$ is odd in $x_1$, we know that $u$ is smooth and $u$ is odd in $x_1$. In $\{ (x',x_n) \in \bR^n ~\big|~ |x'| < 1 \}$, $f$ and $g$ can be written as
$$
f(x') = \frac{|x'|^2}{2} + O(|x'|^{4}) \quad \mbox{and} \quad g(x') = -\frac{|x'|^2}{2}  + O(|x'|^{4}),
$$
respectively. In $\Omega_1$, where $\Omega_r$ is defined as in \eqref{domain_def_Omega}, we define
$$\bar{u}(x') = \fint_{-\frac{\varepsilon}{2} + g(x') < x_n < \frac{\varepsilon}{2} + f(x')} u \, dx_n.$$
Then $\bar{u}$ satisfies
$$\dv((\varepsilon +|x'|^2) \nabla \bar{u}) = \dv F, \quad \mbox{in}\,\,B_1 \subset \bR^{n-1},$$
where
$$
F_i = -2 (x_i + O(|x'|^{3})) \overline{\partial_n u x_n} + O(|x'|^{4})\partial_i \bar{u},
$$
$\overline{\partial_n u x_n}$ is the average of $\partial_n u x_n$ with respect to $x_n$ in $(-\varepsilon/2 + g(x'), \varepsilon/2 + f(x'))$. We have, by \eqref{grad_n_u_bound}, $|x_n| \le C(\varepsilon + |x'|^2)$, and by \eqref{main_goal},
\begin{equation}
\label{F_bound_2}
|F(x')| \le C(n) |x'|(\varepsilon + |x'|^2) \quad x' \in B_1.
\end{equation}
Again, we denote $Y_{k,i}$ to be a $k$-th degree normalized spherical harmonics so that $\{Y_{k,i}\}_{k,i}$ forms an orthonormal basis of $L^2(\bS^{n-2})$, $Y_{1,1}$ to be the one after normalizing $\left.x_1\right|_{\bS^{n-2}}$, and $x' = (r ,\xi)$. Since $\bar u$ is odd with respect to $x_1 = 0$, and in particular $\bar{u}(0) = 0$, we have the following decomposition
\begin{equation}
\label{u_bar_expansion}
\bar{u}(x') = U_{1,1}(r)Y_{1,1}(\xi) + \sum_{k=2}^\infty \sum_{i=1}^{N(k)} U_{k,i}(r)Y_{k,i}(\xi), \quad x' \in B_1\setminus\{0\},
\end{equation}
where $U_{k,i}(r) = \int_{\bS^{n-2}} \bar{u}(r,\xi) Y_{k,i}(\xi) \, d\xi$ and $U_{k,i} \in C([0,1)) \cap C^\infty((0,1))$. Since $\bar{u}(0) = 0$ and $\varepsilon + |x'|^2$ is independent of $\xi$, $U_{1,1}$ satisfies $U_{1,1}(0) = 0$ and
\begin{equation*}
\label{U_11_equation}
LU_{1,1}:= U_{1,1}''(r) + \left( \frac{n-2}{r} + \frac{2r}{\varepsilon + r^2} \right) U_{1,1}'(r) - \frac{n-2}{r^2} U_{1,1}(r) = H(r), \quad 0 < r <1,
\end{equation*}
where
\begin{align*}
H(r) &= \int_{\bS^{n-2}} \frac{(\dv F) Y_{1,1}(\xi)}{\varepsilon+r^2} \, d\xi = \int_{\bS^{n-2}} \frac{\partial_r F_r + \frac{1}{r} \nabla_\xi F_\xi}{\varepsilon + r^2} Y_{1,1}(\xi) \, d\xi\\
&= \partial_r \left(\int_{\bS^{n-2}} \frac{F_r}{\varepsilon + r^2} Y_{1,1}(\xi) \, d\xi \right) + \int_{\bS^{n-2}} \frac{2rF_r Y_{1,1}}{(\varepsilon+r^2)^2} - \frac{F_\xi \nabla_\xi Y_{1,1}}{r(\varepsilon + r^2)}\, d\xi\\
&=: A'(r) + B(r), \quad 0 < r < 1,
\end{align*}
and $A(r),B(r) \in C^1([0,1))$ satisfy, in view of \eqref{F_bound_2}, that
\begin{equation}
\label{AB_bounds}
|A(r)| \le C(n)r, \quad |B(r)| \le C(n), \quad 0 < r < 1.
\end{equation}

\textbf{Step 2.}
We will prove, for some constant $C_1(\varepsilon)$, that
\begin{equation}
\label{U_11_formula}
U_{1,1}(r) = C_1 (\varepsilon) h(r) + O(r^{1+\alpha}), \quad 0<r<1.
\end{equation}
We use the method of reduction of order to write down a bounded solution $v$ satisfying $Lv = H$ in $(0,1)$, and then give an estimate $v = O(r^{1+ \alpha})$.

Let $h \in C([0,1]) \cap C^\infty((0,1])$ be the solution of $Lh = 0$ satisfying $h(0) = 0$ and $h(1) = 1$ as in Lemma \ref{ODE_lemma}. Let $v = h w$ and
$$
w(r) := \int_0^r\frac{1}{h^2(s) s^{n-2} (\varepsilon + s^2)} \int_0^s h(\tau) \tau^{n-2} (\varepsilon + \tau^2) H(\tau) \, d\tau ds, \quad 0 < r < 1.
$$
By a direct computation,
$$
Lv = L(hw) = h w'' + \left[ 2h' + \left( \frac{n-2}{r} + \frac{2r}{\varepsilon + r^2} \right)h \right]w' = \frac{h}{G}(Gw')' = H,
$$
where $G= h^2 r^{n-2}(\varepsilon + r^2)$. By \eqref{AB_bounds} and the fact that $h' > 0$, we can estimate
\begin{align*}
&\int_0^s h(\tau) \tau^{n-2} (\varepsilon + \tau^2) H(\tau) \, d\tau \\
& = \int_0^s h(\tau) \tau^{n-2} (\varepsilon + \tau^2) A'(\tau) \, d\tau + O(1)h(s) s^{n-1}(\varepsilon + s^2)\\
&= -\int_0^s h' \tau^{n-2} (\varepsilon + \tau^2) A(\tau) \, d\tau + O(1)h(s) s^{n-1}(\varepsilon + s^2)\\
&= O(1) s^{n-1}(\varepsilon + s^2) \int_{0}^s h'(\tau) \, d\tau+ O(1)h(s) s^{n-1}(\varepsilon + s^2)\\
&= O(1)h(s) s^{n-1}(\varepsilon + s^2).
\end{align*}
Therefore, using \eqref{h_bounds},
$$
|v(r)| \le Ch(r) \int_0^r \frac{s}{h(s)} = O(r^{1+\alpha}), \quad 0 < r < 1.
$$
Since $U_{1,1} - v$ is bounded and satisfies $L(U_{1,1} - v) = 0$ in $(0,1)$, by Lemma \ref{ODE_lemma}, $U_{1,1} - v = C_1(\varepsilon)h$. Hence \eqref{U_11_formula} follows.

\textbf{Step 3.} We will show that $C_1 (\varepsilon) > \frac{1}{C}$ for some positive $\varepsilon$-independent constant $C$.

Denote $x = (r ,\xi , x_n) \in \bR_+ \times \bS^{n-2} \times \bR$ and write \eqref{equzero} under the condition of Theorem \ref{optimality} as the following:
\begin{equation}
\label{equzero_cylindrical}
\left\{
\begin{aligned}
u_{rr} + \frac{n-2}{r}u_r + \frac{1}{r^2}\Delta_{\bS^{n-2}} u + u_{nn} &= 0 \quad\mbox{in}~~B_5 \setminus \overline{(D_1 \cup D_2)}\\
\frac{\partial u}{\partial \nu} &= 0 \quad \mbox{on}~\partial{ D}_{i},~i=1,2,\\
 u &= x_1 \quad \mbox{on } \partial  B_5.
\end{aligned}
\right.
\end{equation}
Let
$$
\hat{u}(r,x_n) = \int_{\bS^{n-2}} u(r,\xi,x_n)Y_{1,1}(\xi) \, d \xi.
$$
Since $u$ is odd in $x_1$, we have $\hat u(0 ,x_n) = 0$ for any $x_n$. Multiplying \eqref{equzero_cylindrical} by $Y_{1,1}(\xi)$ and integrating over $\bS^{n-2}$, we know that $\hat u(r,x_n)$ satisfies
\begin{equation}
\label{equation_u_hat}
\left\{
\begin{aligned}
\hat{u}_{rr} + \frac{n-2}{r}\hat{u}_r - \frac{n-2}{r^2}\hat{u} + \hat{u}_{nn} &= 0 \quad\mbox{in}~~\widehat{B}_5 \setminus \overline{(\widehat{D}_1 \cup \widehat{D}_2)}\\
\frac{\partial \hat u}{\partial \nu} &= 0 \quad \mbox{on}~\partial{\widehat D}_{i},~i=1,2,\\
\hat u &= 0 \quad \mbox{on } \{r = 0\},\\
\hat u &= r \quad \mbox{on } \partial \widehat B_5,
\end{aligned}
\right.
\end{equation}
where
\begin{align*}
\widehat B_5 &:=\{(r,x_n)\in \bR_+\times \bR ~\big|~ r^2 + x_n^2 < 25 \},\\
\widehat D_i &:= \{(r,x_n)\in \bR_+\times \bR ~\big|~ r^2 + (x_n - (-1)^i (1 + \varepsilon/2))^2 < 1\},
\end{align*}
and $\nu$ is the unit inner normal of $\partial \widehat{D}_i$. Clearly $\hat v(r) = r$ satisfies the first line of \eqref{equation_u_hat}, and $\frac{\partial \hat v}{\partial \nu} < 0$ on $\partial{\widehat D}_{i}$, $i = 1,2$. Thus, we know that $r$ is a subsolution of \eqref{equation_u_hat}, and therefore $\hat{u} \ge r$. Then
$$U_{1,1}(r)= \fint_{-\varepsilon/2 + g(x') <x_n<\varepsilon/2 + f(x')} \hat{u}(r,x_n) \, dx_n \ge r.$$
By \eqref{U_11_formula}, \eqref{h_bounds}, and the above, we have
$$
r \le U_{1,1}(r) = C_1(\varepsilon)h(r) + O(r^{1+\alpha}) \le C_1(\varepsilon) r^\alpha + \frac{1}{2}r, \quad\forall 0 < r \le r_0,
$$
where $r_0$ is a small constant independent of $\varepsilon$,
which implies that
$$
C_1(\varepsilon) \ge  \frac{1}{2}r_0^{1-\alpha}.
$$

\textbf{Step 4.} Completion of the proof of Theorem \ref{optimality}.

It follows, in view of \eqref{U_11_formula}, Step 3,  and \eqref{h_bounds}, that there exists some positive constant $r_0$ independent of $\varepsilon$ such that
\begin{equation}
\label{U_11_lowerbound}
U_{1,1}(r) \ge \frac{1}{C} h(r) + O(r^{1+ \alpha}) \ge \frac{1}{2C} h(r), \quad 0 < r \le r_0.
\end{equation}
By \eqref{h_bounds},
\begin{equation}
\label{h1_lowerbound}
h(r) \ge \frac{1}{C} r^\beta (\varepsilon + r^2)^{\frac{\alpha - \beta}{2}}\quad  \mbox{for}~~\beta = \frac{2\alpha^2 + \alpha(n-1)}{n-3+\alpha}.
\end{equation}
By \eqref{u_bar_expansion}, \eqref{U_11_lowerbound}, and \eqref{h1_lowerbound}, we have
$$
\left( \int_{\bS^{n-2}} |\bar{u}(\sqrt{\varepsilon},\xi)|^2 \, d\xi \right)^{1/2} \ge |U_{1,1}(\sqrt{\varepsilon})| \ge \frac{1}{C}h(\sqrt{\varepsilon}) \ge \frac{1}{C} \varepsilon^{\alpha/2}.
$$
Then, there exists a $\xi_0 \in \bS^{n-2}$ such that $|\bar{u}(\sqrt{\varepsilon}, \xi_0)| \ge \frac{1}{C} \varepsilon^{\alpha/2}$. Since $\bar u$ is the average of $u$ in the $x_n$ direction, there exists an $x_n$ such that
\begin{equation}
\label{u_lower_bound}
|u(\sqrt{\varepsilon}, \xi_0, x_n)| \ge \frac{1}{C} \varepsilon^{\alpha/2}.
\end{equation}
Estimate \eqref{grad_u_lower_bound} follows from \eqref{u_lower_bound} and $u(0) = 0$.
\end{proof}

\bibliographystyle{amsplain}
\begin{bibdiv}
\begin{biblist}

\bib{ACKLY}{article}{
      author={Ammari, H.},
      author={Ciraolo, G.},
      author={Kang, H.},
      author={Lee, H.},
      author={Yun, K.},
       title={Spectral analysis of the {N}eumann-{P}oincar\'{e} operator and
  characterization of the stress concentration in anti-plane elasticity},
        date={2013},
        ISSN={0003-9527},
     journal={Arch. Ration. Mech. Anal.},
      volume={208},
      number={1},
       pages={275\ndash 304},
  url={https://doi-org.proxy.libraries.rutgers.edu/10.1007/s00205-012-0590-8},
      review={\MR{3021549}},
}

\bib{ADY}{article}{
      author={Ammari, H.},
      author={Davies, B.},
      author={Yu, S.},
       title={Close-to-touching acoustic subwavelength resonators:
  eigenfrequency separation and gradient blow-up},
        date={2020},
        ISSN={1540-3459},
     journal={Multiscale Model. Simul.},
      volume={18},
      number={3},
       pages={1299\ndash 1317},
         url={https://doi-org.proxy.libraries.rutgers.edu/10.1137/20M1313350},
      review={\MR{4128998}},
}

\bib{AKLLL}{article}{
      author={Ammari, H.},
      author={Kang, H.},
      author={Lee, H.},
      author={Lee, J.},
      author={Lim, M.},
       title={Optimal estimates for the electric field in two dimensions},
        date={2007},
        ISSN={0021-7824},
     journal={J. Math. Pures Appl. (9)},
      volume={88},
      number={4},
       pages={307\ndash 324},
  url={https://doi-org.proxy.libraries.rutgers.edu/10.1016/j.matpur.2007.07.005},
      review={\MR{2384571}},
}

\bib{AKL}{article}{
      author={Ammari, H.},
      author={Kang, H.},
      author={Lim, M.},
       title={Gradient estimates for solutions to the conductivity problem},
        date={2005},
        ISSN={0025-5831},
     journal={Math. Ann.},
      volume={332},
      number={2},
       pages={277\ndash 286},
  url={https://doi-org.proxy.libraries.rutgers.edu/10.1007/s00208-004-0626-y},
      review={\MR{2178063}},
}

\bib{BASL}{article}{
      author={Babu\v{s}ka, I.},
      author={Andersson, B.},
      author={Smith, P.J.},
      author={Levin, K.},
       title={Damage analysis of fiber composites. {I}. {S}tatistical analysis
  on fiber scale},
        date={1999},
        ISSN={0045-7825},
     journal={Comput. Methods Appl. Mech. Engrg.},
      volume={172},
      number={1-4},
       pages={27\ndash 77},
  url={https://doi-org.proxy.libraries.rutgers.edu/10.1016/S0045-7825(98)00225-4},
      review={\MR{1685902}},
}

\bib{BLY1}{article}{
      author={Bao, E.},
      author={Li, Y.Y.},
      author={Yin, B.},
       title={Gradient estimates for the perfect conductivity problem},
        date={2009},
        ISSN={0003-9527},
     journal={Arch. Ration. Mech. Anal.},
      volume={193},
      number={1},
       pages={195\ndash 226},
  url={https://doi-org.proxy.libraries.rutgers.edu/10.1007/s00205-008-0159-8},
      review={\MR{2506075}},
}

\bib{BLY2}{article}{
      author={Bao, E.},
      author={Li, Y.Y.},
      author={Yin, B.},
       title={Gradient estimates for the perfect and insulated conductivity
  problems with multiple inclusions},
        date={2010},
        ISSN={0360-5302},
     journal={Comm. Partial Differential Equations},
      volume={35},
      number={11},
       pages={1982\ndash 2006},
  url={https://doi-org.proxy.libraries.rutgers.edu/10.1080/03605300903564000},
      review={\MR{2754076}},
}

\bib{BLL}{article}{
      author={Bao, J.G.},
      author={Li, H.G.},
      author={Li, Y.Y.},
       title={Gradient estimates for solutions of the {L}am\'{e} system with
  partially infinite coefficients},
        date={2015},
        ISSN={0003-9527},
     journal={Arch. Ration. Mech. Anal.},
      volume={215},
      number={1},
       pages={307\ndash 351},
  url={https://doi-org.proxy.libraries.rutgers.edu/10.1007/s00205-014-0779-0},
      review={\MR{3296149}},
}

\bib{BLL2}{article}{
      author={Bao, J.G.},
      author={Li, H.G.},
      author={Li, Y.Y.},
       title={Gradient estimates for solutions of the {L}am\'{e} system with
  partially infinite coefficients in dimensions greater than two},
        date={2017},
        ISSN={0001-8708},
     journal={Adv. Math.},
      volume={305},
       pages={298\ndash 338},
  url={https://doi-org.proxy.libraries.rutgers.edu/10.1016/j.aim.2016.09.023},
      review={\MR{3570137}},
}

\bib{BT1}{incollection}{
      author={Bonnetier, E.},
      author={Triki, F.},
       title={Pointwise bounds on the gradient and the spectrum of the
  {N}eumann-{P}oincar\'{e} operator: the case of 2 discs},
        date={2012},
   booktitle={Multi-scale and high-contrast {PDE}: from modelling, to
  mathematical analysis, to inversion},
      series={Contemp. Math.},
      volume={577},
   publisher={Amer. Math. Soc., Providence, RI},
       pages={81\ndash 91},
  url={https://doi-org.proxy.libraries.rutgers.edu/10.1090/conm/577/11464},
      review={\MR{2985067}},
}

\bib{BT2}{article}{
      author={Bonnetier, E.},
      author={Triki, F.},
       title={On the spectrum of the {P}oincar\'{e} variational problem for two
  close-to-touching inclusions in 2{D}},
        date={2013},
        ISSN={0003-9527},
     journal={Arch. Ration. Mech. Anal.},
      volume={209},
      number={2},
       pages={541\ndash 567},
  url={https://doi-org.proxy.libraries.rutgers.edu/10.1007/s00205-013-0636-6},
      review={\MR{3056617}},
}

\bib{BV}{article}{
      author={Bonnetier, E.},
      author={Vogelius, M.},
       title={An elliptic regularity result for a composite medium with
  ``touching'' fibers of circular cross-section},
        date={2000},
        ISSN={0036-1410},
     journal={SIAM J. Math. Anal.},
      volume={31},
      number={3},
       pages={651\ndash 677},
  url={https://doi-org.proxy.libraries.rutgers.edu/10.1137/S0036141098333980},
      review={\MR{1745481}},
}

\bib{BudCar}{article}{
      author={Budiansky, B.},
      author={Carrier, G.~F.},
       title={{High Shear Stresses in Stiff-Fiber Composites}},
        date={1984},
        ISSN={0021-8936},
     journal={Journal of Applied Mechanics},
      volume={51},
      number={4},
       pages={733\ndash 735},
         url={https://doi.org/10.1115/1.3167717},
}

\bib{CY}{article}{
      author={Capdeboscq, Y.},
      author={Yang~Ong, S.C.},
       title={Quantitative {J}acobian determinant bounds for the conductivity
  equation in high contrast composite media},
        date={2020},
        ISSN={1531-3492},
     journal={Discrete Contin. Dyn. Syst. Ser. B},
      volume={25},
      number={10},
       pages={3857\ndash 3887},
  url={https://doi-org.proxy.libraries.rutgers.edu/10.3934/dcdsb.2020228},
      review={\MR{4147367}},
}

\bib{DL}{article}{
      author={Dong, H.},
      author={Li, H.G.},
       title={Optimal estimates for the conductivity problem by {G}reen's
  function method},
        date={2019},
        ISSN={0003-9527},
     journal={Arch. Ration. Mech. Anal.},
      volume={231},
      number={3},
       pages={1427\ndash 1453},
  url={https://doi-org.proxy.libraries.rutgers.edu/10.1007/s00205-018-1301-x},
      review={\MR{3902466}},
}

\bib{DZ}{article}{
      author={Dong, H.},
      author={Zhang, H.},
       title={On an elliptic equation arising from composite materials},
        date={2016},
        ISSN={0003-9527},
     journal={Arch. Ration. Mech. Anal.},
      volume={222},
      number={1},
       pages={47\ndash 89},
  url={https://doi-org.proxy.libraries.rutgers.edu/10.1007/s00205-016-0996-9},
      review={\MR{3519966}},
}

\bib{Gor}{article}{
      author={Gorb, Y.},
       title={Singular behavior of electric field of high-contrast concentrated
  composites},
        date={2015},
        ISSN={1540-3459},
     journal={Multiscale Model. Simul.},
      volume={13},
      number={4},
       pages={1312\ndash 1326},
         url={https://doi-org.proxy.libraries.rutgers.edu/10.1137/140982076},
      review={\MR{3418221}},
}

\bib{JiKang}{article}{
      author={Ji, Y.},
      author={Kang, H.},
       title={Spectrum of the neumann-poincar\'e operator and optimal estimates
  for transmission problems in presence of two circular inclusions},
        date={2021},
        note={arXiv:2105.06093},
}

\bib{Kang}{inproceedings}{
      author={Kang, H.},
       title={Quantitative analysis of field concentration in presence of
  closely located inclusions of high contrast},
   booktitle={Proceedings of the {I}nternational {C}ongress of {M}athematicians
  2022, to appear},
}

\bib{KLY1}{article}{
      author={Kang, H.},
      author={Lim, M.},
      author={Yun, K.},
       title={Asymptotics and computation of the solution to the conductivity
  equation in the presence of adjacent inclusions with extreme conductivities},
        date={2013},
        ISSN={0021-7824},
     journal={J. Math. Pures Appl. (9)},
      volume={99},
      number={2},
       pages={234\ndash 249},
  url={https://doi-org.proxy.libraries.rutgers.edu/10.1016/j.matpur.2012.06.013},
      review={\MR{3007847}},
}

\bib{KLY2}{article}{
      author={Kang, H.},
      author={Lim, M.},
      author={Yun, K.},
       title={Characterization of the electric field concentration between two
  adjacent spherical perfect conductors},
        date={2014},
        ISSN={0036-1399},
     journal={SIAM J. Appl. Math.},
      volume={74},
      number={1},
       pages={125\ndash 146},
         url={https://doi-org.proxy.libraries.rutgers.edu/10.1137/130922434},
      review={\MR{3162415}},
}

\bib{Kel}{article}{
      author={Keller, J.~B.},
       title={{Stresses in Narrow Regions}},
        date={1993},
        ISSN={0021-8936},
     journal={Journal of Applied Mechanics},
      volume={60},
      number={4},
       pages={1054\ndash 1056},
         url={https://doi.org/10.1115/1.2900977},
}

\bib{KL}{article}{
      author={Kim, J.},
      author={Lim, M.},
       title={Electric field concentration in the presence of an inclusion with
  eccentric core-shell geometry},
        date={2019},
        ISSN={0025-5831},
     journal={Math. Ann.},
      volume={373},
      number={1-2},
       pages={517\ndash 551},
  url={https://doi-org.proxy.libraries.rutgers.edu/10.1007/s00208-018-1688-6},
      review={\MR{3968879}},
}

\bib{L}{article}{
      author={Li, H.G.},
       title={Asymptotics for the {E}lectric {F}ield {C}oncentration in the
  {P}erfect {C}onductivity {P}roblem},
        date={2020},
        ISSN={0036-1410},
     journal={SIAM J. Math. Anal.},
      volume={52},
      number={4},
       pages={3350\ndash 3375},
         url={https://doi-org.proxy.libraries.rutgers.edu/10.1137/19M1282623},
      review={\MR{4126320}},
}

\bib{LLY}{article}{
      author={Li, H.G.},
      author={Li, Y.Y.},
      author={Yang, Z.},
       title={Asymptotics of the gradient of solutions to the perfect
  conductivity problem},
        date={2019},
        ISSN={1540-3459},
     journal={Multiscale Model. Simul.},
      volume={17},
      number={3},
       pages={899\ndash 925},
         url={https://doi-org.proxy.libraries.rutgers.edu/10.1137/18M1214329},
      review={\MR{3977105}},
}

\bib{LWX}{article}{
      author={Li, H.G.},
      author={Wang, F.},
      author={Xu, L.},
       title={Characterization of electric fields between two spherical perfect
  conductors with general radii in 3{D}},
        date={2019},
        ISSN={0022-0396},
     journal={J. Differential Equations},
      volume={267},
      number={11},
       pages={6644\ndash 6690},
  url={https://doi-org.proxy.libraries.rutgers.edu/10.1016/j.jde.2019.07.007},
      review={\MR{4001067}},
}

\bib{LN}{article}{
      author={Li, Y.Y.},
      author={Nirenberg, L.},
       title={Estimates for elliptic systems from composite material},
        date={2003},
        ISSN={0010-3640},
     journal={Comm. Pure Appl. Math.},
      volume={56},
      number={7},
       pages={892\ndash 925},
         url={https://doi-org.proxy.libraries.rutgers.edu/10.1002/cpa.10079},
      review={\MR{1990481}},
}

\bib{LV}{article}{
      author={Li, Y.Y.},
      author={Vogelius, M.},
       title={Gradient estimates for solutions to divergence form elliptic
  equations with discontinuous coefficients},
        date={2000},
        ISSN={0003-9527},
     journal={Arch. Ration. Mech. Anal.},
      volume={153},
      number={2},
       pages={91\ndash 151},
  url={https://doi-org.proxy.libraries.rutgers.edu/10.1007/s002050000082},
      review={\MR{1770682}},
}

\bib{LY2}{article}{
      author={Li, Y.Y.},
      author={Yang, Z.},
       title={Gradient estimates of solutions to the insulated conductivity
  problem in dimension greater than two},
        date={2020},
        note={arXiv:2012.14056},
}

\bib{LimYun}{article}{
      author={Lim, M.},
      author={Yun, K.},
       title={Blow-up of electric fields between closely spaced spherical
  perfect conductors},
        date={2009},
        ISSN={0360-5302},
     journal={Comm. Partial Differential Equations},
      volume={34},
      number={10-12},
       pages={1287\ndash 1315},
  url={https://doi-org.proxy.libraries.rutgers.edu/10.1080/03605300903079579},
      review={\MR{2581974}},
}

\bib{Mar}{article}{
      author={Markenscoff, X.},
       title={Stress amplification in vanishingly small geometries},
        date={1996},
        ISSN={1432-0924},
     journal={Computational Mechanics},
      volume={19},
      number={1},
       pages={77\ndash 83},
         url={https://doi.org/10.1007/BF02824846},
}

\bib{We}{article}{
      author={Weinkove, B.},
       title={The insulated conductivity problem, effective gradient estimates
  and the maximum principle},
        date={2021},
        note={arXiv:2103.14143},
}

\bib{Y1}{article}{
      author={Yun, K.},
       title={Estimates for electric fields blown up between closely adjacent
  conductors with arbitrary shape},
        date={2007},
        ISSN={0036-1399},
     journal={SIAM J. Appl. Math.},
      volume={67},
      number={3},
       pages={714\ndash 730},
         url={https://doi-org.proxy.libraries.rutgers.edu/10.1137/060648817},
      review={\MR{2300307}},
}

\bib{Y2}{article}{
      author={Yun, K.},
       title={Optimal bound on high stresses occurring between stiff fibers
  with arbitrary shaped cross-sections},
        date={2009},
        ISSN={0022-247X},
     journal={J. Math. Anal. Appl.},
      volume={350},
      number={1},
       pages={306\ndash 312},
  url={https://doi-org.proxy.libraries.rutgers.edu/10.1016/j.jmaa.2008.09.057},
      review={\MR{2476915}},
}

\bib{Y3}{article}{
      author={Yun, K.},
       title={An optimal estimate for electric fields on the shortest line
  segment between two spherical insulators in three dimensions},
        date={2016},
        ISSN={0022-0396},
     journal={J. Differential Equations},
      volume={261},
      number={1},
       pages={148\ndash 188},
  url={https://doi-org.proxy.libraries.rutgers.edu/10.1016/j.jde.2016.03.005},
      review={\MR{3487255}},
}

\end{biblist}
\end{bibdiv}

\end{document}